\newcommand{\bec}{\mbox{$\boldsymbol{e}$}}
\newtheorem{theorem}{Theorem}
\newtheorem{lemma}{Lemma}
\newtheorem{example}{Example}
\newtheorem{remark}{Remark}
\begin{document}
%%%%%%%%%%%%%%%%%%%%%%%%%%%%%%%%%%%%%%%%%%%
\newcommand{\bea}{\begin{eqnarray}}
\newcommand{\eea}{\end{eqnarray}}
\newcommand{\nn}{\nonumber}
\newcommand{\bee}{\begin{eqnarray*}}
\newcommand{\eee}{\end{eqnarray*}}
\newcommand{\lb}{\label}
\newcommand{\la}{\lambda}
\newcommand{\pr}{\partial}
\newcommand{\bp}{\mbox{$\mathbf{p}$}}
\newcommand{\bpi}{\mbox{\boldmath$\pi$}}
\newcommand{\bphi}{\mbox{\boldmath$\phi$}}
\newcommand{\bPhi}{\mbox{\boldmath$\Phi$}}
\newcommand{\bpsi}{\mbox{\boldmath$\psi$}}
\newcommand{\be}{\mbox{$\boldsymbol{e}$}}
\newcommand{\nii}{\noindent}
%%%%%%%%%%%%%%%%%%%%%%%%%%%%%%%%%%%%%%%%%%%%
\title{{\bf{ Analysis of an infinite-buffer batch-size-dependent service queue
with discrete-time Markovian arrival process:}} D-$MAP/G_n^{(a,b)}/1$}
\author{U. C. Gupta$^{1}$\thanks{Corresponding author.
\newline
\textit{E-mail addresses:}
%\newline
umesh@maths.iitkgp.ac.in (U. C. Gupta),
%\newline
nitinkumar7276@gmail.com (Nitin Kumar),
\newline
spiitkgp11@gmail.com (S. Pradhan),
faridaparvezb@gmail.com (F. P. Barbhuiya).
},  Nitin Kumar$^{1}$, S. Pradhan$^{2}$, F. P. Barbhuiya$^{2}$~\\
{\it $^{1}$Department of Mathematics, Indian Institute of Technology Kharagpur,}\\
{\it Kharagpur-721302, India.}\\
{\it $^{2}$Department of Mathematics, Visvesvaraya National Institute of Technology Nagpur,}\\
{\it Nagpur-440010, India.}}
\date{\today}
%\date{Draft-2 Last Updated Time: \currenttime,~\today}
%
\maketitle \nii{\bf Abstract:} Discrete-time queueing models find huge applications as they are used in
modeling queueing systems arising in digital platforms like telecommunication systems, computer networks,
etc. In this paper, we analyze an infinite-buffer queueing model with discrete Markovian arrival process. The
units on arrival are served in batches by a single server according to the general bulk-service rule, and the
service time follows general distribution with service rate depending on the size of the batch being served.
We mathematically formulate the model using the supplementary variable technique and obtain the vector
generating function at the departure epoch. The generating function is in turn used to extract the joint
distribution of queue and server content in terms of the roots of the characteristic equation. Further, we
develop the relationship between the distribution at the departure epoch and the distribution at arbitrary,
pre-arrival and outside observer's epoch, which is used to obtain the latter ones. We evaluate some essential
performance measures of the system and also discuss the computing process extensively which is demonstrated
by few numerical examples.
%
%In this paper we consider a discrete-time queueing model
%$DMAP/G_n^{(a,b)}/1/\infty$.
\vspace*{.2cm}\\
{\bf Keywords:} Batch-size dependent, Discrete-time, General bulk service, Infinite-buffer, Markovian arrival
process, Phase-type.
%*************************************************************************************
\section{Introduction}
Queueing models involving batch service have been investigated by
many researchers in the past due to its potential application in
several stochastic systems. Chaudhry and Templeton
\cite{Chaudhry1983} and Medhi \cite{medhi1984recent}  provides a
detailed discussion on different types of bulk queueing models. The
general bulk service rule find application in the field of
manufacturing and production systems, where the server starts
service with a batch of minimum threshold size `$a$' and a maximum
size `$b$'. Moreover, the instances when the service rate (or
service time) is dependent on the size of the batch being served,
are more appropriate in modeling many of the real world problems.
Such queues are known as batch size dependent service queues and
plays a vital role in group screening practice of blood or urine
samples examined for a particular disease, say HIV (see Abolnikov
and Dukhovny \cite{abolnikov2003optimization}, Bar-lev et al.
\cite{bar2004optimal, bar2007applications}). A group if found
infected by the disease is set for further testing which may occur
individually. If the size of the batch is large, then testing
individual blood sample may take longer time which is a direct
application to batch size dependent service. Moreover, in modern
telecommunication systems, the transfer of information (data, voice,
videos, images, etc.) occurs in packets in bulk where the
transmission time depends on the batch size of packets. In the past
few years many researchers have focussed on studying batch size
dependent service queues, both with finite-buffer (see Yu and Alfa
\cite{yu2015algorithm}, Banerjee et al. \cite{banerjee2014analysis})
and infinite-buffer (see Claeys et al. \cite{claeys2013analysis,
claeys2013tail}, Pradhan and Gupta \cite{pradhan2017analysis,
pradhan2017modeling}). Claeys et al. \cite{claeys2013tail} provided
the application of batch-size dependent service policy mainly in the
area of telecommunication sector and illustrated the effect of
neglecting batch-size dependent service times on the performance
measures of the system.
\par In many real-world queueing systems the arrival of customers or units do
not occur independently of each other. As for instance, in telecommunication systems the transmission of
information, in the form of packetized data, takes place with a very high speed over a large network which
exhibits burstiness, correlation and self-similarity. These features cannot be captured well using the
traditional Poisson or Bernoulli arrival processes, and hence Markovian arrival process ($MAP$) can be
adopted to cope with the bursty and correlated nature of the arrival process. In particular, the
discrete-time analogous of $MAP$, i.e., D-$MAP$ is more applicable in telecommunication context due to the
discrete nature of the transmission of information units in slotted systems, see for example Alfa
\cite{alfa2010queueing,alfa2016applied}, Bruneel and Kim \cite{bruneel1993discrete}, Hunter
\cite{hunter1983mathematical}, Takagi \cite{takagi1993queuing}, Woodward \cite{woodward1994communication}.
D-$MAP$ is also a versatile arrival process and covers many other well known arrival processes such as the
Bernoulli arrival process, the switched Bernoulli process (SBP), the Markov modulated Bernoulli process
(MMBP), the discrete-time PH-renewal process etc. Much work has been done in the past on queueing models with
D-$MAP$ arrival process with both finite and infinite-buffer. As for instance, Chaudhry and Gupta
\cite{chaudhry2003queue, chaudhry2003analysis} studied the finite-buffer D-$MAP/G/1/N$ and
D-$MAP/G^{(a,b)}/1/N$ queue respectively where they obtained the queue-content distribution at various
epochs. Further, Gupta et al \cite{gupta2014analysis} addressed a more general D-$MAP/G_n/1/N$ queue with the
service time depending on the number of customers waiting in the queue. Yu and Alfa \cite{yu2015algorithm}
considered a batch size dependent service D-${MAP/G^{(1, a, b)}_n/1/N}$ queue where the server serves the
customers individually if there are less than `$a$' customers in the queue, otherwise it servers according to
the general bulk service ($a,b$) rule. For the infinite-buffer queue, Pradhan and Gupta
\cite{pradhan2017analysis} addressed the continuous-time $MAP/G^{(a,b)}_n/1$ queue whereas Claeys et al.
\cite{claeys2013analysis} studied the discrete time analogous of \cite{pradhan2017analysis} with the
assumption of batch Markovian arrival process.
\par In this paper, we give a complete theoretical and computational
analysis of an infinite-buffer discrete-time queueing model with Markovian arrival process (D-$MAP$). We
assume that the server provides service in batches according to the general bulk service rule and the service
time follows general distribution and depends on the size of the batch undergoing service. At first, using
the supplementary variable technique, we obtain the steady-state bivariate vector generating function (VGF)
of the queue-length and server content at the departure epoch of the batch, in a completely known form. From
the bivariate VGF we extract the distribution at the departure epoch in terms of roots of the associated
characteristic equation. Further, in order to obtain the distribution at arbitrary, pre-arrival and outside
observer's epoch, we establish their relation with the distribution at the departure epoch. Keeping note of
the complexity of the model under consideration, we discuss in detail the whole computing process by
considering discrete phase-type and arbitrary distributed service time distributions which cover almost all
distributions that arise in various applications. We evaluate some significant performance characteristics of
the model and demonstrate the computing process by certain numerical examples. In this paper, the use of
supplementary variable technique makes the analysis of the model relatively simpler, which otherwise, would
have been difficult using the embedded Markov chain technique because of the complexity associated with the
construction of the transition probability matrix. It may be mentioned that Claeys et al.
\cite{claeys2013analysis} considered the batch arrival D-$BMAP/G_n^{(l,c)}/1$ queue; however, they focussed
mainly on obtaining the VGF and moments whereas in this paper we focus on extracting the distributions in a
completely known form from the VGF.
\par The remaining portion of the paper is organized as follows. In
Section \ref{sec2} we give the detailed description of the
considered discrete-time system followed by the analysis of the
model in Section \ref{sec3}. In Section \ref{sec4} we obtain the
joint queue and server content distribution at various epochs and
then discuss the computing process in detail in Section \ref{sec5}.
In Section \ref{sec6} we discuss some special cases of the model and
evaluate some performance measures in Section \ref{sec7}. We present
some numerical examples in Section \ref{sec8} which is followed by
the conclusion.
%
%
%%%%%%%%%%%%%%%%%%%%%%%%%%%%%%%%%%%%%%%%%%%%%%%%%%%%%%%%%%%%%%%%%%%%%%%%%%%%%%%%%%%%%%%%
%\newpage
%&************************************************************************************
\section{Model description, assumptions and notations}\label{sec2}
We consider a discrete-time queueing model in which the customers arrive according to discrete-Markovian
arrival process(D-MAP) and service time of the batches of customers follow general distribution. Below we
describe various processes.
\begin{itemize}
\item \textbf{Arrival process:} In D-MAP the arrivals are governed by an underlying $m$-state Markov
chain having probability $C_{ij},(1 \le i,j \le m)$ with a
transition from state $i$ to $j$ without an arrival, and having
probability  $D_{ij},(1 \le i,j \le m)$ with a transition from state
$i$ to $j$ with an arrival. Let
$\mathbf{C}=[C_{ij}],\mathbf{D}=[D_{ij}]$ be the $m \times m$
non-negative matrices both having at least one positive entry. The
matrix $\mathbf{(C+D)}$ with  $\mathbf{(C+D)}\boldsymbol{e}=\be$,
where $\be$ is a column vector of ones with suitable dimension, is a
stochastic matrix corresponding to an irreducible Markov chain
underlying the D-MAP. Let
$\overline{\bpi}=[\overline{\pi}_{1},\overline{\pi}_{2},\dots,\overline{\pi}_{m}]$
be the stationary probability vector of the underlying Markov chain,
i.e.,
$\overline{\bpi}\mathbf{(C+D)}=\overline{\bpi},~\overline{\bpi}\be=1.$
The fundamental and stationary arrival rate is given by
$(\lambda^{*})=\overline{\bpi}\mathbf{D}\be$.
\item \textbf{Service rule:} A single server serves the customers
in batches according to general bulk service $(a, b)$ rule. If the
queue contains less than `$a$' number ofcustomers, server enters
into the idle period and waits to initiate the service until at
least `$a$' customers gets accumulated. When the queue size is $r
(a \le r \le b)$, entire group of customers are taken for service.
However, if the queue size is greater than `$b$', the server serves
first `$b$' customers and the remaining have to wait for the next
round of service.
\item \textbf{Service process:} The service time of the batches follow
general distribution and are assumed to be dependent on batch size of ongoing service. Let us define the
random variable ${V_r} (a \le r \le b)$ as the service time of a batch of size $r$ with probability mass
function $s_r(n)=Pr(V_r=n), ~n \ge 1$, probability generating function $S^*_r (z)=
\sum\limits_{n=1}^{\infty}s_r(n) z^n$, and the mean service time
$\mu^{-1}_r=S_r=\sum\limits_{n=1}^{\infty}ns_r(n)=S^{*(1)}_r(1)$, where $S^{*(1)}_r(1)=\frac{d}{dz}S^*_r
(z)\Big|_{z=1}$.
\item \textbf{Late arrival system with delayed access:} In discrete-time,
the time axis is divided into intervals of equal length referred to
as (time) slots, separated by slot boundaries. We assume that the
length of each slot is unity and time axis is  marked as $0,1,2,
\dots ,t ,\dots $. We further assume that a potential arrival occurs
in the interval $(t-,t)$ and a potential departure takes place in
the interval $(t,t+)$. However, if an arrival finds the server idle,
it cannot depart in the same slot in which it has arrived and has to
wait for at least one slot before getting served. This is referred
as late arrival system with delayed access (LAS-DA), see Hunter
\cite{hunter1983mathematical}. Various time epochs at which events
occur are delineated in Figure \ref{LASFig}.% (\textbf{Change Fig.}).
\item For the stability of the system, we must have that $\rho<1$
where $\rho=\frac{\lambda^{*}}{b \mu_b}$.
\end{itemize}
%%%%%%%%___________________________________________________________________________________
\begin{figure}%[htbp]
 \centering
  %\begin{minipage}[b]{0.9\linewidth}
       \includegraphics[width=15cm, height=10cm]{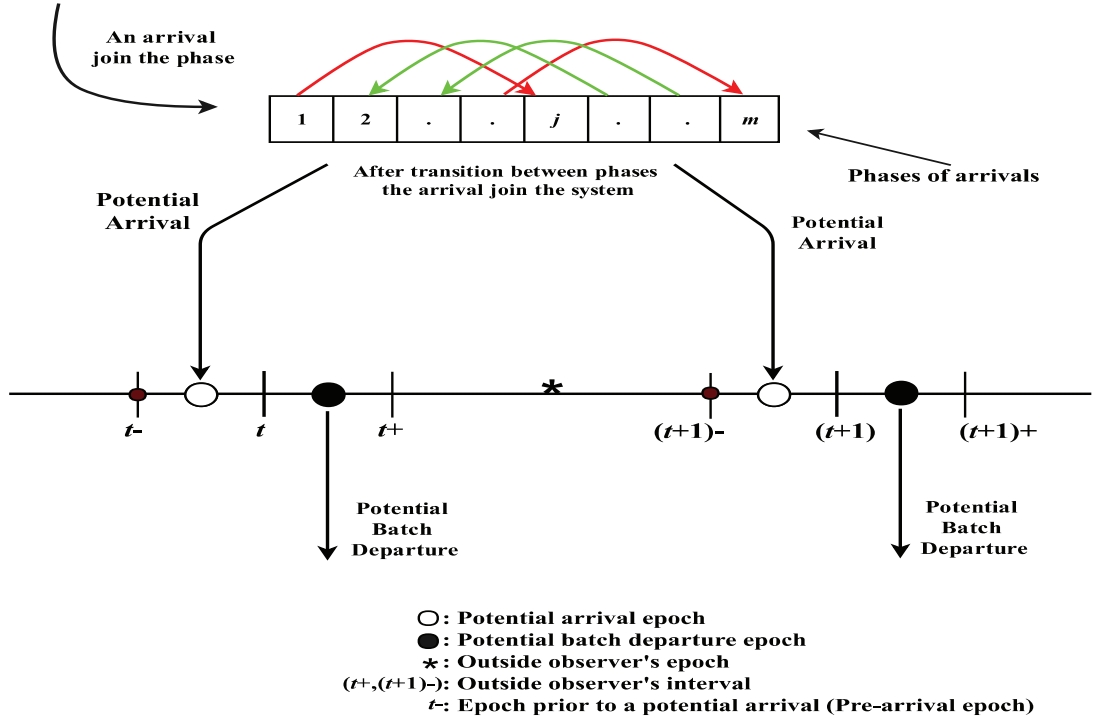}
     \caption{Various time epoch in LAS-DA}  \label{LASFig}
  %\end{minipage}
\end{figure}
%%%%%%%%%%%%%%%%%%%%%%%%%%%%%%%%%%%%%%%
\par%(\textbf{Check it})\\
Let us denote the matrix $\mathbf{A}_r(n,k)$, $a \le r \le b,$ to be
the matrix of order $m \times m$ whose $(i,j)$th element is the
conditional probability that, a departure which left at least $a$
customers in the queue with the arrival process in state $i$,
exactly $n$ new customers arrive during the service period (of
length $k$ slots) of $r$ customers; the phase of the arrival process
is in phase $j$ at the departure epoch. So $\mathbf{A}_r(n,k)$ can
be written as \bea
\mathbf{A}_r(0,k) &=& \mathbf{C}   \mathbf{A}_r(0,k-1), ~~ k \ge 1,  \nn \\
\mathbf{A}_r(n,k) &=& \mathbf{C}   \mathbf{A}_r(n,k-1) + \mathbf{D}
\mathbf{A}_r(n-1,k-1), ~~ k\ge n \ge 1, \nn \eea with
$\mathbf{A}_r(0,0)=\mathbf{I}$ and $\mathbf{A}_r(n,k) =\mathbf{0},~n
> k \ge 0,$ where $\mathbf{I}$ and $\mathbf{0}$ are identity and zero
matrix of order $m \times m$, respectively. Also, let $\mathbf{A}^*_r(z,k)$ be the matrix-generating function
of $\mathbf{A}_r(n,k)$, then \bea \mathbf{A}^*_r(z,k) &=&\sum_{n=0}^{\infty} \mathbf{A}_r(n,k)
z^n=[\mathbf{A}^*_r(z,1)]^k=[\mathbf{C}+\mathbf{D}z]^k, \lb{03} \eea where
$\mathbf{A}^*_r(z,1)=\mathbf{C}+\mathbf{D}z$ is the matrix-generating function of the number of customers
arriving in one slot. Let us denote the matrix $\mathbf{A}_r(n)=[A_r(n)]_{i,j}$, $n \ge 0, a \le r \le b, 1
\le i, j \le m,$ to be the conditional probability that, a departure which left at least $a$ customers in
the queue with the arrival process in state $i$, and during the service period of $r$ customers exactly
$n$ new customers arrive; the phase of the arrival process is in phase $j$ at the departure epoch. So
$\mathbf{A}_r(n)$ can be written as \bea \mathbf{A}_r(n) &=& \sum_{k=\max(1,n)}^{\infty} s_r(k)
\mathbf{A}_r(n,k), ~~ n \ge 0. \nn \eea Further, let $\mathbf{A}^*_r(z)$ be the matrix-generating function of
$\mathbf{A}_r(n)$. Therefore we have
\bea \mathbf{A}^*_r(z) &=& \sum_{n=0}^{\infty}\mathbf{A}_r(n) z^n=\sum_{k=1}^{\infty} s_r(k)
[\mathbf{C}+\mathbf{D}z]^k,~~~~  a \le r \le b. \label{FA(s)} \eea
\begin{remark}
As and when more notations are used, they will be defined at respective places.
\end{remark}
%%%%%%%%%%%%%%%%%%%%%%%%%%%%%%%%%%%%%%%%%%%%%%%%%%%%%%%%%%%%%%%%%%%%%%%
\section{Analysis of the model} \label{sec3}
Let us define the following random variables at the beginning of the
slot boundary $i.e.$ just before a potential arrival:
\begin{itemize}
  \item $N_q (t-) \equiv$  Number of customers in the queue waiting for service at $t-$.
  \item $N_s (t-) \equiv$ Number of customers with the server at $t-$.
  \item $J (t-) \equiv$ Phase of the arrival process at $t-$.
  \item $U (t-) \equiv $ Remaining service time of a batch in service (if any) at $t-$.
\end{itemize}
\nii
Let us define for $1 \le i \le m$,
\bea
p_{i}(n,0;t-)     &=& Pr\{N_q (t-) = n, N_s (t-) = 0, J(t-) = i, \hbox{server idle} \}, ~~~ 0 \le n \le a-1, \nn \\
\pi_{i}(n,r,u;t-) &=& Pr\{N_q (t-) = n, N_s (t-) = r, J(t-) = i,  U(t-)=u, \hbox{server busy} \}, \nn\\
&& \hspace{6cm} ~ n \ge 0, ~a \le r \le b, ~ u \ge 1. \nn
\eea
Also let us define the limiting probabilities as
\bea
p_{i}(n,0)     &=& \lim_{t-~ \rightarrow \infty} p_{i}(n,0;t-),     \nn \\
\pi_{i}(n,r,u) &=& \lim_{t-~ \rightarrow \infty} \pi_{i}(n,r,u;t-).  \nn \eea
Further, we define \bea
\bp(n,0)     &=& [p_{1}(n,0),\dots,p_{i}(n,0),\dots,p_{m}(n,0)],     \nn \\
\bpi(n,r,u)     &=& [\pi_{1}(n,r,u),\dots,\pi_{i}(n,r,u),\dots,\pi_{m}(n,r,u)]. \nn
\eea
%%%%%%%%%%%%%%%%%%%%%%%%%%%%%%%%%%%%%%%%%%%%%%%%%%%%%%%%%%%
Relating the states of the system at two consecutive time epochs $t-$ and $(t+1)-$ for each phase, and then
using matrix and vector notations after taking $t-~ \to \infty$, we obtain in steady-state
\bea
\bp(0,0) &=& \bp(0,0) \mathbf{C} + \sum_{r=a}^{b} \bpi(0,r,1)\mathbf{C}, \lb{sp1}\\
\bp(n,0) &=& \bp(n,0) \mathbf{C} + \sum_{r=a}^{b} \bpi(n,r,1)\mathbf{C} + \bp(n-1,0) \mathbf{D} +
\sum_{r=a}^{b} \bpi(n-1,r,1)\mathbf{D},\nn\\
&&\hspace{9cm} ~~ 1 \le n \le a-1, \lb{sp2}\\
\bpi(0,a,u)&=&\bpi(0,a,u+1)\mathbf{C} + \bp(a-1,0) \mathbf{D} s_a(u) + \sum_{r=a}^{b} \bpi(a,r,1) \mathbf{C}
s_a(u) \nn \\
&&~~+ \sum_{r=a}^{b} \bpi(a-1,r,1) \mathbf{D} s_a(u), \lb{sp3}\\
\bpi(0,r,u)&=&\bpi(0,r,u+1)\mathbf{C} + \sum_{i=a}^{b} \bpi(r,i,1) \mathbf{C} s_r(u) + \sum_{i=a}^{b} \bpi(r-1,i,1) \mathbf{D} s_r(u),\nn \\
&&\hspace{9cm} ~~ a+1 \le r \le b, \lb{sp4}\\
\bpi(n,r,u)&=&\bpi(n,r,u+1)\mathbf{C} + \bpi(n-1,r,u+1) \mathbf{D}, ~~n \ge 1,~  a \le r \le b-1, \lb{sp5}\\
\bpi(n,b,u)&=&\bpi(n,b,u+1)\mathbf{C} + \bpi(n-1,b,u+1) \mathbf{D} + \sum_{r=a}^{b} \bpi(n+b,r,1) \mathbf{C} s_b(u) \nn \\
&&+ \sum_{r=a}^{b} \bpi(n+b-1,r,1) \mathbf{D} s_b(u), ~~ n \ge 1. \lb{sp6}
\eea
%%%%%%%%%%%%%%%
Let us define the VGF of $\bpi(n,r,u)$ as
\bea \bpi^*(n,r,z)&=& \sum_{u=1}^{\infty} \bpi(n,r,u) z^u,~~ ~ |z| \leq 1, ~n \geq 0,~  a \le r \le b.
\lb{sp7} \eea
It follows from (\ref{sp7}) that
\bea \bpi^*(n,r,1) &=& \sum_{u=1}^{\infty} \bpi(n,r,u) = \bpi(n,r), ~~~~  n \geq 0,~  a \le r \le b. \nn \eea
Multiplying (\ref{sp3})-(\ref{sp6}) by $z^u$  and summing over $u$ from 1 to $\infty$, we get
\bea
\bpi^*(0,a,z)&=& \frac{1}{z} (\bpi^*(0,a,z) - z \bpi(0,a,1))\mathbf{C} + \bp(a-1,0) \mathbf{D} S^*_a(z) \nn \\
&+& \sum_{r=a}^{b} \bpi(a,r,1) \mathbf{C} S^*_a(z) + \sum_{r=a}^{b} \bpi(a-1,r,1) \mathbf{D} S^*_a(z), \lb{sp8}\\
\bpi^*(0,r,z)&=& \frac{1}{z} (\bpi^*(0,r,z) - z \bpi(0,r,1))\mathbf{C} + \sum_{i=a}^{b} \bpi(r,i,1) \mathbf{C} S^*_r(z) \nn \\
&+& \sum_{i=a}^{b} \bpi(r-1,i,1) \mathbf{D} S^*_r(z), ~~ a+1 \le r \le b, \lb{sp9}\\
\bpi^*(n,r,z)&=& \frac{1}{z} (\bpi^*(n,r,z) - z \bpi(n,r,1))\mathbf{C} \nn \\
&+& \frac{1}{z} (\bpi^*(n-1,r,z) - z \bpi(n-1,r,1)) \mathbf{D}, ~~n \ge 1,~  a \le r \le b-1, \lb{sp10}\\
\bpi^*(n,b,z)&=& \frac{1}{z} (\bpi^*(n,b,z) - z \bpi(n,b,1))\mathbf{C} + \frac{1}{z} (\bpi^*(n-1,b,z) - z \bpi(n-1,b,1)) \mathbf{D} \nn \\
&+& \sum_{r=a}^{b} \bpi(n+b,r,1) \mathbf{C} S^*_b(z) + \sum_{r=a}^{b} \bpi(n+b-1,r,1) \mathbf{D} S^*_b(z), ~~ n \ge 1. \lb{sp11}
\eea
%%%%%%%%%%%%%%%%%%%%%%%%%%%%%%%%%%%%%%%%
Post multiplying by \be~ in (\ref{sp1})-(\ref{sp2}) and then summing over $n$ and $r$, we get
\bea
\bp(a-1,0) \mathbf{D} \be &=& \sum_{n=0}^{a-2} \sum_{r=a}^{b} \bpi(n,r,1) \be + \sum_{r=a}^{b} \bpi(a-1,r,1) \mathbf{C} \be. \lb{sp12}
\eea
This can be written as
\bea
\bp(a-1,0) \mathbf{D} \be &=& \sum_{n=0}^{a-1} \sum_{r=a}^{b} \bpi(n,r,1) \mathbf{C} \be + \sum_{n=0}^{a-2} \sum_{r=a}^{b} \bpi(n,r,1) \mathbf{D} \be.  \lb{sp13}
\eea
%%%%%%%%%%%%%%%%%%%%%%%%
Post multiplying (\ref{sp8})-(\ref{sp11}) by \be~ and then summing over $n$ and $r$, we get
\bea \hspace{-1cm}\left(\frac{z-1}{z}\right) \sum_{n=0}^{\infty} \sum_{r=a}^{b} \bpi^*(n,r,z) \be
&=& - \sum_{n=0}^{\infty} \sum_{r=a}^{b} \bpi(n,r,1) \be \nn \\
&+& \sum_{n=0}^{a-1} \sum_{r=a}^{b} \bpi(n,r,1) \mathbf{C} \be S^*_a(z) + \sum_{n=0}^{a-2} \sum_{r=a}^{b} \bpi(n,r,1) \mathbf{D} \be S^*_a(z) \nn \\
&+& \sum_{r=a}^{b} \bpi(a,r,1) \mathbf{C} \be S^*_a(z) + \sum_{r=a}^{b} \bpi(a-1,r,1) \mathbf{D} \be S^*_a(z), \nn\\
&+& \sum_{n=a+1}^{b} \sum_{r=a}^{b} \bpi(n,r,1) \mathbf{C} \be S^*_n(z) + \sum_{n=a}^{b-1} \sum_{r=a}^{b} \bpi(n,r,1) \mathbf{D} \be S^*_n(z) \nn \\
&&\hspace{-0.8cm} +\sum_{n=b+1}^{\infty} \sum_{r=a}^{b} \bpi(n,r,1) \mathbf{C} \be S^*_b(z) +
\sum_{n=b}^{\infty} \sum_{r=a}^{b} \bpi(n,r,1) \mathbf{D} \be S^*_b(z). \lb{sp14} \eea
\bea &&\hspace{-2cm}\Rightarrow~~\left(\frac{z-1}{z}\right) \sum_{n=0}^{\infty} \sum_{r=a}^{b} \bpi^*(n,r,z) \be \nn \\
&=&  \sum_{n=0}^{a} \sum_{r=a}^{b} \bpi(n,r,1) \mathbf{C} \be (S^*_a(z)-1) + \sum_{n=0}^{a-1} \sum_{r=a}^{b} \bpi(n,r,1) \mathbf{D} \be (S^*_a(z)-1) \nn \\
&+& \sum_{n=a+1}^{b} \sum_{r=a}^{b} \bpi(n,r,1) \mathbf{C} \be (S^*_n(z)-1) + \sum_{n=a}^{b-1} \sum_{r=a}^{b} \bpi(n,r,1) \mathbf{D} \be (S^*_n(z)-1) \nn \\
&+& \sum_{n=b+1}^{\infty} \sum_{r=a}^{b} \bpi(n,r,1) \mathbf{C} \be (S^*_b(z)-1) + \sum_{n=b}^{\infty}
\sum_{r=a}^{b} \bpi(n,r,1) \mathbf{D} \be (S^*_b(z)-1). \lb{sp15} \eea
Letting $z \rightarrow 1 $ in (\ref{sp15}), we get
\bea 1 - \sum_{n=0}^{a-1} \bp(n,0) \be
&=&  \sum_{n=0}^{a} \sum_{r=a}^{b} \bpi(n,r,1) \be S_a + \sum_{n=a+1}^{b} \sum_{r=a}^{b} \bpi(n,r,1) \be S_n \nn \\
&+& \sum_{n=b+1}^{\infty} \sum_{r=a}^{b} \bpi(n,r,1)  \be S_b. \lb{sp16}
\eea
Further define
\bea
\widetilde{\bpi}^*(x,\xi,z) &=& \sum_{n=0}^{\infty} \sum_{r=a}^{b} \bpi^*(n,b,z) x^n \xi^r,~~~ |x| \le 1, ~ |\xi| \le 1, ~ |z| \le 1. \nn
\eea
Now we multiplying (\ref{sp8})-(\ref{sp11}) by $x^n$ and $\xi^r$  and summing over $n$ from 0 to $\infty$ and $r$ from $a$ to $b$, we get
\bea
\widetilde{\bpi}^*(x,\xi,z)  \left( \displaystyle{\frac{z\mathbf{I}-\mathbf{C}-\mathbf{D}x}{z}}\right)
&=& - \sum_{n=0}^{\infty} \sum_{r=a}^{b} \bpi(n,r,1) x^n \xi^r (\mathbf{C}+\mathbf{D}x) + \bp(a-1,0) \mathbf{D} \xi^a S^*_a(z) \nn \\
&+& \sum_{n=a}^{b} \sum_{r=a}^{b} (\bpi(n,r,1) \mathbf{C}   + \bpi(n-1,r,1) \mathbf{D}) \xi^n S^*_n(z) \nn \\
&+& \sum_{n=b+1}^{\infty} \sum_{r=a}^{b} (\bpi(n,r,1) \mathbf{C}   +
\bpi(n-1,r,1) \mathbf{D}) x^{n-b} \xi^b S^*_b(z). \lb{sp17} \eea
%(\textbf{Check and modify})
%
Our aim is to determine the bivariate VGF of the queue and server content. For this, we utilize  the
eigenvalues and eigenvectors of $(\mathbf{C+D}x)$, see Claeys et al \cite{claeys2013analysis}, Pradhan and
Gupta \cite{pradhan2017analysis}. Now let $\gamma_1(x),\gamma_2(x),\dots,\gamma_m(x)$ are the eigenvalues and
$\boldsymbol{\eta_1}(x),\boldsymbol{\eta_2}(x),\dots,\boldsymbol{\eta_m}(x)$ are the corresponding right
eigenvectors of $(\mathbf{C+D}x)$. Thus, for $1 \le i \le m$, we have
\bea
(\mathbf{C}+\mathbf{D}x)\boldsymbol{\eta_i}(x)  &=&  \gamma_i(x)\boldsymbol{\eta_i}(x), \nn \\
\{\gamma_i(x) \mathbf{I} - (\mathbf{C}+\mathbf{D}x) \} \boldsymbol{\eta_i}(x) &=& \mathbf{0}. \lb{sp18}
\eea
Setting $z=\gamma_i(x)$ in (\ref{sp17}) and post-multiplying it by $\boldsymbol{\eta_i}(x)$ on both sides and using (\ref{sp18}), we get
\bea
&&\hspace{-2cm}\sum_{n=0}^{\infty} \sum_{r=a}^{b} \bpi(n,r,1) x^n \xi^r (\mathbf{C}+\mathbf{D}x) \boldsymbol{\eta_i}(x) \nn \\
&=& \bp(a-1,0) \mathbf{D} \xi^a S^*_a(\gamma_i(x)) \boldsymbol{\eta_i}(x) \nn \\
&+& \sum_{n=a}^{b} \sum_{r=a}^{b} (\bpi(n,r,1) \mathbf{C}   + \bpi(n-1,r,1) \mathbf{D}) \xi^n S^*_n(\gamma_i(x)) \boldsymbol{\eta_i}(x) \nn \\
&+& \sum_{n=b+1}^{\infty} \sum_{r=a}^{b} (\bpi(n,r,1) \mathbf{C}   + \bpi(n-1,r,1) \mathbf{D}) x^{n-b} \xi^b
S^*_b(\gamma_i(x)) \boldsymbol{\eta_i}(x). \lb{sp19} \eea
Since (\ref{sp19}) is true for all $i$ from 1 to $m$, so we have
\bea
&& \hspace{-1cm} \sum_{n=0}^{\infty} \sum_{r=a}^{b} \bpi(n,r,1) x^n \xi^r (\mathbf{C}+\mathbf{D}x) [\boldsymbol{\eta_1}(x),\dots,\boldsymbol{\eta_m}(x)] \nn \\
&& \hspace{-0.75cm}= \bp(a-1,0) \mathbf{D} \xi^a [S^*_a(\gamma_1(x)) \boldsymbol{\eta_1}(x),\dots,S^*_a(\gamma_m(x)) \boldsymbol{\eta_m}(x)] \nn \\
&& \hspace{-0.75cm}+ \sum_{n=a}^{b} \sum_{r=a}^{b} (\bpi(n,r,1) \mathbf{C}   + \bpi(n-1,r,1) \mathbf{D})
\xi^n
[S^*_n(\gamma_1(x)) \boldsymbol{\eta_1}(x),\dots,S^*_n(\gamma_m(x)) \boldsymbol{\eta_m}(x)] \nn \\
&& \hspace{-0.75cm}+\sum_{n=b+1}^{\infty} \sum_{r=a}^{b} (\bpi(n,r,1) \mathbf{C}   + \bpi(n-1,r,1)
\mathbf{D}) x^{n-b} \xi^b [S^*_b(\gamma_1(x)) \boldsymbol{\eta_1}(x),\dots,S^*_b(\gamma_m(x))
\boldsymbol{\eta_m}(x)]. \lb{sp20} \eea
Further define \bea
\mathbf{R}(x)&=&[\boldsymbol{\eta_1}(x),\boldsymbol{\eta_2}(x),\dots,\boldsymbol{\eta_m}(x)]. \lb{sp21} \eea
The inverse of $\mathbf{R}(x)$ exists whenever each eigenvalue is of multiplicity 1, for details see
\cite{claeys2013analysis}, \cite{pradhan2017analysis}. Now using (\ref{sp21}) in (\ref{sp20}) and then
post-multiplying it by $\mathbf{R}^{-1}(x)$, we get
\bea
&& \hspace{-1cm} \sum_{n=0}^{\infty} \sum_{r=a}^{b} \bpi(n,r,1) x^n \xi^r (\mathbf{C}+\mathbf{D}x)  \nn \\
&=& \bp(a-1,0) \mathbf{D} \xi^a \mathbf{R}(x) [diag\{S^*_a(\gamma_i(x))\}_{i=1}^m] \mathbf{R}^{-1}(x) \nn \\
&+& \sum_{n=a}^{b} \sum_{r=a}^{b} (\bpi(n,r,1) \mathbf{C}   + \bpi(n-1,r,1) \mathbf{D}) \xi^n
\mathbf{R}(x) [diag\{S^*_n(\gamma_i(x))\}_{i=1}^m] \mathbf{R}^{-1}(x) \nn \\
&+& \sum_{n=b+1}^{\infty} \sum_{r=a}^{b} (\bpi(n,r,1) \mathbf{C}   + \bpi(n-1,r,1) \mathbf{D}) x^{n-b} \xi^b
\mathbf{R}(x) [diag\{S^*_b(\gamma_i(x))\}_{i=1}^m] \mathbf{R}^{-1}(x). \lb{sp22} \eea
where $[diag\{S^*_r(\gamma_i(x))\}_{i=1}^m],~a\le r \le b$, is a diagonal matrix of order $m$ with diagonal
entries $S^*_r(\gamma_1(x)),\dots,S^*_r(\gamma_m(x))$. Using the theory of eigenvalues and eigenvectors, we
can write
\bea
\mathbf{C}+\mathbf{D}x&=&\mathbf{R}(x) [diag\{\gamma_i(x)\}_{i=1}^m] \mathbf{R}^{-1}(x), \nn \\
\hbox{and~} \hspace{2cm} S^*_r(\mathbf{C}+\mathbf{D}x) &=& \mathbf{R}(x) [diag\{S^*_r(\gamma_i(x))\}_{i=1}^m]
\mathbf{R}^{-1}(x). \hspace{4cm} \lb{sp23} \eea
Now using (\ref{FA(s)}) and (\ref{sp23}) in (\ref{sp22}), we obtain
\bea \hspace{-1cm} \sum_{n=0}^{\infty} \sum_{r=a}^{b} \bpi(n,r,1) x^n \xi^r (\mathbf{C}+\mathbf{D}x)
&=& \bp(a-1,0) \mathbf{D} \xi^a \mathbf{A}^*_a(x) \nn \\
&+& \sum_{n=a}^{b} \sum_{r=a}^{b} (\bpi(n,r,1) \mathbf{C}   + \bpi(n-1,r,1) \mathbf{D}) \xi^n
\mathbf{A}^*_n(x) \nn \\
&+& \sum_{n=b+1}^{\infty} \sum_{r=a}^{b} (\bpi(n,r,1) \mathbf{C}   + \bpi(n-1,r,1) \mathbf{D}) x^{n-b} \xi^b
\mathbf{A}^*_b(x), \lb{sp24} \eea
where $\mathbf{A}^*_r(x)=S^*_r(\mathbf{C}+\mathbf{D}x),~(a \le r \le b)$. Now using (\ref{sp24}), we obtain
the bivariate VGF of the queue-content at the departure epoch which is given in the next section.
%%%%%%%%%%%%%%%%%%%%%-----Post--Catastrophe--epoch--probability--------%%%%%%%%%%%%%%%%%%%%%%%%%%%%%%%%%%%%%%%%%%%%
\subsection{Bivariate VGF at departure epoch}\label{sec3.1}
Let us define $\bpi^+(n,r) = [\pi_{1}^+(n,r),\pi_{2}^+(n,r),\dots,\pi_{m}^+(n,r)]$ as the joint probability
vector whose $j$th element $(\pi_{j}^+(n,r))$ is the probability that there are $n$ number of customers in
the queue at departure epoch of a batch of size $r$ and arrival process is in phase $j$.\\
$\bphi^+(n)$ = Probability vector that there are $n$ number of customers in the queue at departure epoch of a batch
 = $\sum\limits_{r=a}^{b}\bpi^+(n,r)$. Let us define
$\widetilde{\bpi}^+(x,\xi)=\sum\limits_{n=0}^{\infty} \sum\limits_{r=a}^{b} \bpi^+(n,r) x^n \xi^r,$ and
$\bPhi^+(x)=\sum\limits_{n=0}^{\infty}\bphi^+(n)x^n$. Using probabilistic argument, $\bpi^+(n,r)$ and
$\bpi(n,r,1)$ are connected as: \bea
\bpi^+(0,r) &=&  \Omega \bpi(0,r,1)\mathbf{C}, \lb{dsp1}\\
\bpi^+(n,r) &=&  \Omega (\bpi(n,r,1)\mathbf{C} + \bpi(n-1,r,1)\mathbf{D}),~ n \ge 1, ~ a \le r \le b.   \lb{dsp2}
\eea
where $\Omega^{-1}= \sum\limits_{n=0}^{\infty}  \sum\limits_{r=a}^{b} \bpi(n,r,1)\be$.
\begin{lemma} \label{splemma1}
\bea
\Omega^{-1} &=& \psi^{-1} \left( 1- \sum_{n=0}^{a-1} \bp(n,0) \be\right), \nn
\eea
where $\psi=\sum\limits_{n=0}^{a} \bphi^+(n) \be S_a + \sum\limits_{n=a+1}^{b} \bphi^+(n) \be S_n
+ \sum\limits_{n=b+1}^{\infty} \bphi^+(n)  \be S_b$.
\end{lemma}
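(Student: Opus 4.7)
The plan is to interpret $\psi$ probabilistically as the expected service time of the batch that begins immediately after a generic departure, to interpret $\Omega^{-1}$ as the per-slot batch-departure rate, and then to apply a renewal-reward identity that equates $\Omega^{-1}\psi$ with the server-busy probability. Since the latter equals $1-\sum_{n=0}^{a-1}\bp(n,0)\be$, rearrangement will yield the claim.

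First I will verify, using (\ref{dsp1})--(\ref{dsp2}), that $\{\bphi^+(n)\be\}_{n\ge 0}$ is a bona fide probability distribution---namely the distribution of the queue length observed immediately after a batch departure. Indeed, summing $\bphi^+(n)\be=\Omega\sum_{r}\big(\bpi(n,r,1)\mathbf{C}\be+\bpi(n-1,r,1)\mathbf{D}\be\big)$ over $n$ telescopes, using $(\mathbf{C}+\mathbf{D})\be=\be$, to $\Omega\sum_{n,r}\bpi(n,r,1)\be=\Omega\cdot\Omega^{-1}=1$. The general bulk-service $(a,b)$ rule then dictates that, given queue length $n$ at a departure, the next batch to be served has size $a$ when $n\le a$ (if $n<a$ the server idles until the queue reaches $a$; if $n=a$ service starts at once), size $n$ when $a<n\le b$, and size $b$ when $n>b$. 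Weighting $\bphi^+(n)\be$ by the mean service time of the resulting next batch and grouping the three ranges produces precisely the quantity $\psi$; hence $\psi$ is the mean service time of the batch immediately following a departure.

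Next I will note that $\Omega^{-1}=\sum_{n,r}\bpi(n,r,1)\be$ is the stationary probability that a batch is on the verge of departure in the current slot, and therefore equals the long-run batch-departure rate per slot. By the standard renewal-reward identity, the fraction of slots in which the server is busy equals the batch-departure rate times the mean service time per batch. Because every batch is uniquely preceded either by a departure or by the onset of an idle period (which itself terminates in a batch of size $a$), the empirical batch-size distribution coincides with the next-batch-size distribution seen from a departure epoch. Combining these observations yields
$$1-\sum_{n=0}^{a-1}\bp(n,0)\be \;=\; P(\text{server busy}) \;=\; \Omega^{-1}\psi,$$
and dividing through by $\psi$ produces the stated identity.

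The step demanding the most care is the renewal identification: one must check that the next-batch-size distribution read off from $\bphi^+$ matches the batch-empirical distribution of batch sizes. For $a<r<b$ this is immediate, since only a single queue length $n=r$ yields a size-$r$ batch. The boundary cases $r=a$ and $r=b$ collapse several queue lengths onto a single batch size; in particular, matching at $r=a$ requires accounting for the idle contribution $\bp(a-1,0)\mathbf{D}\be$, and this is exactly what the balance identity (\ref{sp13}) accomplishes by expressing that idle contribution as $\sum_{n=0}^{a-1}\sum_r\bpi(n,r,1)\mathbf{C}\be+\sum_{n=0}^{a-2}\sum_r\bpi(n,r,1)\mathbf{D}\be$, so that the sum $\sum_{n=0}^{a}\bphi^+(n)\be$ aligns with the total rate of size-$a$ batch starts.
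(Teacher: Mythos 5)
Your proof is correct, but it takes a genuinely different route from the paper's. The paper's argument is purely algebraic: it starts from the identity (\ref{sp16}) (itself obtained by letting $z\to 1$ in the VGF relation (\ref{sp15})), regroups each sum so that the combinations $\bpi(0,r,1)\mathbf{C}$ and $\bpi(n,r,1)\mathbf{C}+\bpi(n-1,r,1)\mathbf{D}$ appear, recognizes these after multiplication by $\Omega$ as the departure-epoch vectors $\bpi^+(n,r)$ via (\ref{dsp1})--(\ref{dsp2}), and reads off $\Omega\big(1-\sum_{n=0}^{a-1}\bp(n,0)\be\big)=\psi$ directly. You instead give a renewal-reward / rate-conservation argument: $\Omega^{-1}=\sum_{n,r}\bpi(n,r,1)\be$ is the per-slot batch-departure rate, $\psi$ is the mean service time of the batch initiated after a generic departure (computed from the $(a,b)$ rule and the departure-epoch queue-length law $\bphi^+(n)\be$), and the busy probability $1-\sum_{n=0}^{a-1}\bp(n,0)\be$ equals departure rate times mean service time per batch. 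Your route buys interpretability---it explains why $\psi$ has its particular three-range form and why the left-hand side is the busy probability---at the cost of invoking ergodic/renewal reasoning external to the paper's supplementary-variable machinery, which proves the same identity as a formal consequence of the balance equations. Two small remarks: your identification of the next batch size as exactly $a$ whenever a departure leaves $n\le a$ customers silently uses the fact that the D-$MAP$ generates at most one arrival per slot, so the queue cannot jump over the threshold $a$; this holds here but deserves explicit mention, since it is precisely where the argument would break for batch arrivals. Also, your closing appeal to (\ref{sp13}) is really only a consistency check---once the bijection between departures and the batches they trigger is granted, the boundary cases $r=a$ and $r=b$ need no separate treatment.
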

\begin{proof}
Equation (\ref{sp16}) can be written as
\bea
1 - \sum_{n=0}^{a-1} \bp(n,0) \be &=&  \sum_{r=a}^{b} \bpi(0,r,1) \mathbf{C} \be S_a \nn\\
&+& \sum_{n=1}^{a} \sum_{r=a}^{b} (\bpi(n,r,1)\mathbf{C}+\bpi(n-1,r,1)\mathbf{D}) \be S_a \nn \\
&+& \sum_{n=a+1}^{b} \sum_{r=a}^{b} (\bpi(n,r,1)\mathbf{C}+\bpi(n-1,r,1)\mathbf{D}) \be S_n \nn \\
&+& \sum_{n=b+1}^{\infty} \sum_{r=a}^{b} (\bpi(n,r,1)\mathbf{C}+\bpi(n-1,r,1)\mathbf{D})  \be S_b. \nn
\eea
Multiplying by $\Omega$ in above equation, we get
\bea
\Omega \left(1 - \sum_{n=0}^{a-1} \bp(n,0) \be \right) &=& \sum_{n=0}^{a} \bphi^+(n) \be S_a + \sum_{n=a+1}^{b}
 \bphi^+(n) \be S_n + \sum_{n=b+1}^{\infty} \bphi^+(n)  \be S_b. \nn
\eea
Hence proved.
\end{proof}
%%%%
\begin{lemma} \label{splemma2}
\bea
\Omega \bp(a-1,0) &=& \sum_{n=0}^{a-1} \bphi^+(n) (\overline{\mathbf{D}})^{(a-1-n)} (\mathbf{I}-\mathbf{C})^{-1}, \nn
\eea
where $\overline{\mathbf{D}}=(\mathbf{I}-\mathbf{C})^{-1}\mathbf{D}$.
\end{lemma}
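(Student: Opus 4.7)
The plan is to use the steady-state balance equations (\ref{sp1}) and (\ref{sp2}), multiplied throughout by the scalar $\Omega$, together with the definitions (\ref{dsp1})--(\ref{dsp2}) and $\bphi^+(n)=\sum_{r=a}^{b}\bpi^+(n,r)$, to extract a first-order matrix recurrence for $\Omega\bp(n,0)$ whose right-hand side involves only $\bphi^+(n)$. Once this recurrence is in hand, the stated formula at $n=a-1$ follows by a short induction.

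First I would handle the base case. Equation (\ref{sp1}) can be rewritten as $\bp(0,0)(\mathbf{I}-\mathbf{C}) = \sum_{r=a}^{b}\bpi(0,r,1)\mathbf{C}$. Left-multiplying by $\Omega$ and invoking (\ref{dsp1}), the right-hand side collapses to $\bphi^+(0)$, and inverting $(\mathbf{I}-\mathbf{C})$ yields $\Omega\bp(0,0)=\bphi^+(0)(\mathbf{I}-\mathbf{C})^{-1}$, which is exactly the claimed formula at $n=0$ because $\overline{\mathbf{D}}^{0}=\mathbf{I}$.

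For $n\ge 1$ I would proceed analogously using (\ref{sp2}): after isolating $\bp(n,0)(\mathbf{I}-\mathbf{C})$ on the left and multiplying by $\Omega$, the two groups $\Omega\sum_{r}\bpi(n,r,1)\mathbf{C}$ and $\Omega\sum_{r}\bpi(n-1,r,1)\mathbf{D}$ combine via (\ref{dsp2}) into $\bphi^+(n)$, leaving the recurrence
\begin{equation*}
\Omega\bp(n,0) \;=\; \bphi^+(n)(\mathbf{I}-\mathbf{C})^{-1} + \Omega\bp(n-1,0)\,\mathbf{D}(\mathbf{I}-\mathbf{C})^{-1}, \qquad 1\le n\le a-1.
\end{equation*}

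The remaining task is simply to unroll this recurrence. The one subtle bookkeeping point --- and where I expect the only genuine work to lie --- is recognising that the emerging product $[\mathbf{D}(\mathbf{I}-\mathbf{C})^{-1}]^{k}$ factors as $\mathbf{D}\,\overline{\mathbf{D}}^{\,k-1}(\mathbf{I}-\mathbf{C})^{-1}$, so that each contribution $\bphi^+(k)(\mathbf{I}-\mathbf{C})^{-1}$, after being right-multiplied by $[\mathbf{D}(\mathbf{I}-\mathbf{C})^{-1}]^{n-k}$ during iteration, rearranges cleanly into $\bphi^+(k)\,\overline{\mathbf{D}}^{\,n-k}(\mathbf{I}-\mathbf{C})^{-1}$. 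An easy induction on $n$ then produces $\Omega\bp(n,0)=\sum_{k=0}^{n}\bphi^+(k)\,\overline{\mathbf{D}}^{\,n-k}(\mathbf{I}-\mathbf{C})^{-1}$ for every $0\le n\le a-1$, and setting $n=a-1$ delivers the stated identity. Throughout, invertibility of $\mathbf{I}-\mathbf{C}$ is used implicitly, as is standard for the sub-stochastic no-arrival matrix of an irreducible D-$MAP$.
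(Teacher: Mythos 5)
Your proposal is correct and follows essentially the same route as the paper: multiply (\ref{sp1}) and (\ref{sp2}) by $\Omega$, use (\ref{dsp1})--(\ref{dsp2}) to recognise $\bphi^+(n)$, and unroll the resulting recurrence $\Omega\bp(i,0)(\mathbf{I}-\mathbf{C})=\sum_{n=0}^{i}\bphi^+(n)(\overline{\mathbf{D}})^{(i-n)}$ up to $i=a-1$. The only difference is that you spell out the induction and the rearrangement of $[\mathbf{D}(\mathbf{I}-\mathbf{C})^{-1}]^{k}$ into $\overline{\mathbf{D}}^{\,k}$ that the paper compresses into ``after simplification''.
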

\begin{proof}
Multiplying by $\Omega$ in (\ref{sp1}), we get
\bea
\Omega \bp(0,0) (\mathbf{I}-\mathbf{C}) &=& \bphi^+(0), \nn \\
\Omega \bp(0,0)  &=& \bphi^+(0)(\mathbf{I}-\mathbf{C})^{-1}, \nn
\eea
Now multiplying by $\Omega$ in (\ref{sp2}) for $n=i$ and after simplification, finally we get
\bea
\Omega \bp(i,0) (\mathbf{I}-\mathbf{C}) &=& \sum_{n=0}^{i} \bphi^+(n) (\overline{\mathbf{D}})^{(i-n)},~~  1 \le i \le a-1, \nn
\eea
which gives
\bea
\Omega \bp(a-1,0) (\mathbf{I}-\mathbf{C}) &=& \sum_{n=0}^{a-1} \bphi^+(n) (\overline{\mathbf{D}})^{(a-1-n)}, \nn \\
\Omega \bp(a-1,0)  &=& \sum_{n=0}^{a-1} \bphi^+(n) (\overline{\mathbf{D}})^{(a-1-n)}
(\mathbf{I}-\mathbf{C})^{-1}. \nn \eea
Hence proved.
\end{proof}
%%%%%%%%%%%%%%%%%%%%%%%%%%%%%%%%%%%%%%%%%%%%%%%%
Now multiplying (\ref{sp24}) by $\Omega$ and using the definition of departure epoch probabilities and Lemma
\ref{splemma1}, we get
\bea \widetilde{\bpi}^+(x,\xi) = \sum_{n=0}^{a-1} \bphi^+(n) (\overline{\mathbf{D}})^{(a-n)}
 \xi^a \mathbf{A}^*_a(x) + \sum_{n=a}^{b}  \bphi^+(n) \xi^n \mathbf{A}^*_n(x) + \frac{\xi^b}{x^b}
 \sum_{n=b+1}^{\infty} \bphi^+(n) x^{n} \mathbf{A}^*_b(x).  \lb{dsp3}
\eea
Setting $\xi=1$ in (\ref{dsp3}), we get the VGF of $only$ queue length distribution
$\bPhi^+(x)(=\widetilde{\bpi}^+(x,1))$ as
\bea \bPhi^+(x)=\sum_{n=0}^{a-1} \bphi^+(n) (\overline{\mathbf{D}})^{(a-n)} \mathbf{A}^*_a(x) +
\sum_{n=a}^{b}  \bphi^+(n)  \mathbf{A}^*_n(x) + \frac{1}{x^b} \sum_{n=b+1}^{\infty} \bphi^+(n) x^{n}
\mathbf{A}^*_b(x), \lb{dsp4} \eea
%%%%%%%%%%%%%%%%%%%%
%\bea
%x^b \bPhi^+(x) &=& x^b \sum_{n=0}^{a-1} \bphi^+(n) (\overline{\mathbf{D}})^{(a-n)}
%\mathbf{A}^*_a(x) + x^b \sum_{n=a}^{b}  \bphi^+(n)  \mathbf{A}^*_n(x) \nn \\
%&+& \bPhi^+(x) \mathbf{A}^*_b(x) - \sum_{n=0}^{b} \bphi^+(n) x^{n} \mathbf{A}^*_b(x),  \lb{dsp5}
%\eea
which gives
\bea
\bPhi^+(x) (x^b \mathbf{I}-\mathbf{A}^*_b(x)) &=& \sum_{n=0}^{a-1} \bphi^+(n) \Big( (\overline{\mathbf{D}})^{(a-n)} x^b
\mathbf{A}^*_a(x) - x^{n} \mathbf{A}^*_b(x) \Big) \nn \\
&+&  \sum_{n=a}^{b-1}  \bphi^+(n) \Big( x^b \mathbf{A}^*_n(x) - x^{n} \mathbf{A}^*_b(x) \Big).  \lb{dsp6}
\eea
From (\ref{dsp3}), we have
\bea
\widetilde{\bpi}^+(x,\xi) &=& \sum_{n=0}^{a-1} \bphi^+(n) (\overline{\mathbf{D}})^{(a-n)}
 \xi^a \mathbf{A}^*_a(x) + \sum_{n=a}^{b}  \bphi^+(n) \xi^n \mathbf{A}^*_n(x) \nn \\
 &+& \frac{\xi^b}{x^b} \left(\bPhi^+(x) - \sum_{n=0}^{b} \bphi^+(n) x^{n} \right) \mathbf{A}^*_b(x).  \lb{dsp7}
\eea
Now substituting the value of the vector $\bPhi^+(x)$ from (\ref{dsp6}) to (\ref{dsp7}), we get
\bea
\widetilde{\bpi}^+(x,\xi) &=& \sum_{n=0}^{a-1} \bphi^+(n) (\overline{\mathbf{D}})^{(a-n)}
 \xi^a \mathbf{A}^*_a(x) + \sum_{n=a}^{b}  \bphi^+(n) \xi^n \mathbf{A}^*_n(x) \nn \\
 &+& \frac{\xi^b}{x^b} \Big( \Big[ \sum_{n=0}^{a-1} \bphi^+(n) \Big( (\overline{\mathbf{D}})^{(a-n)} x^b
\mathbf{A}^*_a(x) - x^{n} \mathbf{A}^*_b(x) \Big) \nn \\
&+&  \sum_{n=a}^{b-1}  \bphi^+(n) \Big( x^b \mathbf{A}^*_n(x) - x^{n} \mathbf{A}^*_b(x) \Big) \Big] (x^b \mathbf{I}-\mathbf{A}^*_b(x))^{-1} \nn \\
&-& \sum_{n=0}^{b} \bphi^+(n) x^{n} \Big) \mathbf{A}^*_b(x).  \lb{dsp8}
\eea
Post multiplying by $(x^b \mathbf{I}-\mathbf{A}^*_b(x))$ on both sides of (\ref{dsp8}), we obtain
\bea
\widetilde{\bpi}^+(x,\xi)
&=& \Bigg( \sum_{n=0}^{a-1} \bphi^+(n) \Big( (\xi^b-\xi^a) (\overline{\mathbf{D}})^{(a-n)} \mathbf{A}^*_a(x) \mathbf{A}^*_b(x) + (\overline{\mathbf{D}})^{(a-n)} \xi^a x^b  \mathbf{A}^*_a(x) - \xi^b x^n  \mathbf{A}^*_b(x) \Big) \nn \\
&+&  \sum_{n=a}^{b-1}  \bphi^+(n) \Big( (\xi^b-\xi^n) \mathbf{A}^*_n(x)\mathbf{A}^*_b(x) + \xi^n x^b \mathbf{A}^*_n(x) - \xi^b x^{n} \mathbf{A}^*_b(x) \Big)\Bigg)(x^b \mathbf{I}-\mathbf{A}^*_b(x))^{-1}. \nn \\
 \lb{dsp9}
\eea
\section{Joint queue and server content distributions at various epochs} \label{sec4}
In this section, we obtain joint queue and server content distribution at arbitrary, pre-arrival and outside
observer's epochs.
\subsection{Joint queue and server content distribution at arbitrary epoch}
The joint distribution of queue and server content at arbitrary epoch plays an important role in obtaining
system length distribution and also in evaluation of several key performance measures of the queueing model
under consideration. The following theorem presents a correspondence between departure and arbitrary epoch
probability vectors.
\begin{theorem}
The steady-state probability vectors $\{\bp(n,0),\bpi(n,r)\}$ and $\{\bpi^+(n,r),\bphi^+(n)\}$ are connected
by
\bea
\bp(n,0)  &=& \Omega^{-1} \sum_{j=0}^{n} \bphi^+(j) (\overline{\mathbf{D}})^{(n-j)} (\mathbf{I}-\mathbf{C})^{-1},~~~ 0 \le n \le a-1, \lb{asp1} \\
\bpi(0,a) &=& \Big( \bp(a-1,0) \mathbf{D} + \Omega^{-1} (\bphi^+(a)-\bpi^+(0,a)) \Big)(\mathbf{I}-\mathbf{C})^{-1}, \lb{asp2}  \\
\bpi(0,r) &=& \Omega^{-1} (\bphi^+(r)-\bpi^+(0,r))(\mathbf{I}-\mathbf{C})^{-1}, ~~~~ a+1 \le r \le b, \lb{asp3} \\
\bpi(n,r) &=& \Big( \bpi(n-1,r) \mathbf{D} - \Omega^{-1} \bpi^+(n,r)\Big)(\mathbf{I}-\mathbf{C})^{-1},~~n \ge 1,~~ a \le r \le b-1, \lb{asp4}\\
\bpi(n,b) &=& \Big( \bpi(n-1,b) \mathbf{D} + \Omega^{-1} (\bphi^+(n+b)-\bpi^+(n,b)) \Big)(\mathbf{I}-\mathbf{C})^{-1},~~n \ge 1. \lb{asp5}
\eea
\end{theorem}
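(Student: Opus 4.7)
The plan is to establish each of the five identities (\ref{asp1})-(\ref{asp5}) starting from the slot-boundary balance equations (\ref{sp1})-(\ref{sp6}), summing over the remaining-service-time index $u$, and then eliminating the pre-departure vectors $\bpi(\cdot,\cdot,1)$ in favor of the departure-epoch vectors $\bpi^+(\cdot,\cdot)$ via the link (\ref{dsp1})-(\ref{dsp2}).

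For (\ref{asp2})-(\ref{asp5}), I first sum the balance equations (\ref{sp3})-(\ref{sp6}) over $u \geq 1$. Since $\bpi(n,r) = \sum_{u\geq 1}\bpi(n,r,u)$, the shifted sums telescope as $\sum_{u\geq 1}\bpi(n,r,u+1) = \bpi(n,r) - \bpi(n,r,1)$, while the service pmf contributes $\sum_{u\geq 1} s_r(u) = 1$. For instance, summing (\ref{sp5}) gives
\begin{equation*}
\bpi(n,r) = (\bpi(n,r) - \bpi(n,r,1))\mathbf{C} + (\bpi(n-1,r) - \bpi(n-1,r,1))\mathbf{D}.
\end{equation*}
I then use (\ref{dsp2}) in the form $\bpi(n,r,1)\mathbf{C}+\bpi(n-1,r,1)\mathbf{D} = \Omega^{-1}\bpi^+(n,r)$ (for $n\geq 1$) and its sum-over-$r$ counterpart $\sum_{r=a}^b[\bpi(n,r,1)\mathbf{C}+\bpi(n-1,r,1)\mathbf{D}] = \Omega^{-1}\bphi^+(n)$ to clear the $\bpi(\cdot,\cdot,1)$ terms, and finally post-multiply by $(\mathbf{I}-\mathbf{C})^{-1}$. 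This delivers (\ref{asp2})-(\ref{asp5}) one by one: the $\bp(a-1,0)\mathbf{D}$ contribution in (\ref{asp2}) comes directly from the corresponding term in (\ref{sp3}), while the $\bphi^+(n+b)$ contribution in (\ref{asp5}) is produced by recombining the two extra sums inside (\ref{sp6}) through (\ref{dsp2}) with the index shift by $b$.

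For (\ref{asp1}) I start from (\ref{sp1})-(\ref{sp2}), which are already summed over $u$. Equation (\ref{sp1}) combined with (\ref{dsp1}), summed over $r$, yields the base case $\bp(0,0) = \Omega^{-1}\bphi^+(0)(\mathbf{I}-\mathbf{C})^{-1}$. Equation (\ref{sp2}) combined with the sum-over-$r$ form of (\ref{dsp2}) yields the first-order matrix recurrence
\begin{equation*}
\bp(n,0) = \bigl(\bp(n-1,0)\mathbf{D} + \Omega^{-1}\bphi^+(n)\bigr)(\mathbf{I}-\mathbf{C})^{-1}, \quad 1 \leq n \leq a-1.
\end{equation*}
A routine induction on $n$, using $\overline{\mathbf{D}} = (\mathbf{I}-\mathbf{C})^{-1}\mathbf{D}$ to absorb each successive $\mathbf{D}(\mathbf{I}-\mathbf{C})^{-1}$ into an iterated power $(\overline{\mathbf{D}})^{(n-j)}$ (with the convention $(\overline{\mathbf{D}})^{(0)}=\mathbf{I}$), then yields the closed-form (\ref{asp1}). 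As a consistency check, setting $n=a-1$ reproduces Lemma~\ref{splemma2}.

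The main obstacle is not algebraic but bookkeeping: correctly recognizing which groups of source terms on the right-hand sides of (\ref{sp3})-(\ref{sp6}) assemble into $\Omega^{-1}\bpi^+(n,r)$ versus $\Omega^{-1}\bphi^+(n)$ or $\Omega^{-1}\bphi^+(n+b)$ via (\ref{dsp2}), and tracking the index shifts that distinguish the boundary cases $r=a$, $a<r<b$, and $r=b$. In particular, the isolated $\bp(a-1,0)\mathbf{D}$ appearing only in (\ref{sp3}) is what forces a boundary correction in (\ref{asp2}) but not in (\ref{asp3}), and the sum beginning at $n+b$ in (\ref{sp6}) is what turns into $\bphi^+(n+b)$ in (\ref{asp5}). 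Once these identifications are in place, each identity reduces to a one-line manipulation followed by a post-multiplication by $(\mathbf{I}-\mathbf{C})^{-1}$, and (\ref{asp1}) by the straightforward induction described above.
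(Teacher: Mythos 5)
Your proposal is correct and follows essentially the same route as the paper: summing the balance equations (\ref{sp3})--(\ref{sp6}) over $u$ is exactly the paper's step of setting $z=1$ in (\ref{sp8})--(\ref{sp11}), after which both arguments eliminate the $\bpi(\cdot,\cdot,1)$ terms via (\ref{dsp1})--(\ref{dsp2}) and post-multiply by $(\mathbf{I}-\mathbf{C})^{-1}$. Your induction for (\ref{asp1}) is precisely the recurrence established in the proof of Lemma~\ref{splemma2}, which the paper invokes directly.
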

\begin{proof}
From Lemma \ref{splemma2}, we obtain (\ref{asp1}). Now setting $z=1$ in (\ref{sp8})-(\ref{sp11}), we get
\bea
\bpi(0,a) (\mathbf{I}-\mathbf{C})&=& -  \bpi(0,a,1) \mathbf{C} + \bp(a-1,0) \mathbf{D} + \sum_{r=a}^{b} \bpi(a,r,1) \mathbf{C} + \sum_{r=a}^{b} \bpi(a-1,r,1) \mathbf{D}, \lb{asp8}\\
\bpi(0,r) (\mathbf{I}-\mathbf{C})&=& -  \bpi(0,r,1) \mathbf{C} + \sum_{i=a}^{b} \bpi(r,i,1) \mathbf{C}  + \sum_{i=a}^{b} \bpi(r-1,i,1) \mathbf{D}, ~~ a+1 \le r \le b, \lb{asp9}\\
\bpi(n,r) (\mathbf{I}-\mathbf{C})&=& -  \bpi(n,r,1)\mathbf{C} + (\bpi(n-1,r) -  \bpi(n-1,r,1)) \mathbf{D}, ~n \ge 1,~  a \le r \le b-1,~~ \lb{asp10}\\
\bpi(n,b) (\mathbf{I}-\mathbf{C})&=& -  \bpi(n,b,1)\mathbf{C} + (\bpi(n-1,b) -  \bpi(n-1,b,1)) \mathbf{D} + \sum_{r=a}^{b} \bpi(n+b,r,1) \mathbf{C}  \nn \\
&+& \sum_{r=a}^{b} \bpi(n+b-1,r,1) \mathbf{D}, ~ n \ge 1. \lb{asp11}
\eea
Now multiplying (\ref{asp8})-(\ref{asp11}) by $\Omega$ and using Lemma \ref{splemma1},\ref{splemma2} and then using the definition of departure epoch, we get the required relation between arbitrary and departure epoch as given in (\ref{asp2})-(\ref{asp5}).
\end{proof}
%%%%%%%%%%%%%%%%%%%%%%%%%%%%%%%%%%%%%%%%%%%%%%%%%%%%%%%%%%%%%%%%%
\subsection{Queue length and server content distribution at pre-arrival epoch}
Let $\bp^-(n,0),~(0 \le n \le a-1)$ and $\bpi^-(n,r),~(a \le r \le b, n \ge 0)$  be the $1 \times m$ vectors with $i$th  component as
$p_{i}^-(n,0)$ and $\pi_{i}^-(n,r)$, respectively. Let us define $p_{i}^-(n,0)$ as the steady-state probability that an arrival find $n,~ (0 \le n \le a -1)$ customers in the queue, server idle, and phase of the arrival process is $i$. Similarly, we define $\pi_{i}^-(n,r)$ to be the steady-state probability that an arrival finds $n (\ge 0)$ customers in the queue, server busy with $r,~ (a \le r \le b)$ customers and phase of the arrival process is $i$. It can be easily shown (see \cite{pradhan2017analysis}) that the vectors $\bp^-(n,0)$ and $\bpi^-(n,r)$ are given by
\bea
\bp^-(n,0) &=&  \frac{\bp(n,0)\mathbf{D}}{\lambda^{*}},~~~~ 0 \le n \le a-1, \lb{asp12} \\
\bpi^-(n,r) &=&  \frac{\bpi(n,r)\mathbf{D}}{\lambda^{*}},~~~~ a \le r \le b, n \ge 0. \lb{asp13}
\eea
%%%%%%%%%%%%%%%%%%%%%%%%%%%%%%%%%%%%%%%%%%%%%%%%%%%%%%%%%%%%%
\subsection{Queue length and server content distribution at outside observer's
epoch}\label{sec4.1}
In LAS-DA, since an outside observer's observation epoch falls in a
time interval after the potential departure of a batch and before a
potential arrival, the probability vector $\bpi^o(n,r), ~ (a \le r
\le b, n \ge 0)$ that an outside observer sees there are $n$ number
of customers in the queue and $r$ with the server is the same as
that of the arbitrary epoch $\bpi(n,r),  ~ (a \le r \le b, n \ge
0)$, $i.e.$ $ \bpi^o(n,r)=\bpi(n,r), ~ (a \le r \le b, n \ge 0).$
%%%%%%%%%%%%%%%%%%%%%%%%%%%%%%%%%%%%%%%%%%%%%%%%%%%%%%%%%%%%%%
\par
This completes the theoretical analysis of the model under consideration. In the next section we present a
step-wise procedure for computing the distribution at various epochs. One can observe from Section \ref{sec4}
that in order to obtain the distributions at various epochs, first we have to find the distributions at
departure epoch which in given in the next section.
%%%%%%%%%%%%%%%%%%%%%%%%%%%%%%%%%%%%%%%%%%%%%%%%%%%%%%%%%%%%%%
\section{Computing process to obtain the distributions at various
epochs}\label{sec5}
In this section we present the step-wise computing procedure for
evaluation of the distribution at departure epoch. In order to
extract the probability distribution from (\ref{dsp9}) first we have
to determine the unknown probability vectors
$\{\bphi^+(n)\}_{n=0}^{b-1}$. So in total we have to determine $mb$
unknowns $i.e.$ $\{\phi^+_i(n)\}_{n=0}^{b-1},~ 1 \le i \le m$, we
obtain these unknowns from (\ref{dsp6}) using the roots method given
in Chaudhry et al \cite{chaudhry2013simple}, Gupta et al
\cite{gupta2016alternative}, Pradhan and Gupta
\cite{pradhan2017analysis}. For this, first we obtain the
expressions of $\mathbf{A}^*_r(x), ~ a \le r \le b$, by considering
commonly used service-time distributions.
\subsection{Evaluation of $\mathbf{A}^*_r(x), ~ a \le r \le b$}
In this section, we obtain the expression of $\mathbf{A}^*_r(x), ~ a \le r \le b$ when service-time
distribution follows: (i) discrete phase-type ($DPH$) distribution (ii) arbitrary distribution. These
distributions cover almost all types of distributions that arise in many real life situations.
\subsubsection{Service-time follows $DPH$ distribution}
Let service-time follows a $DPH$ distribution with representation $DPH_r(\boldsymbol{\beta}_r,\mathbf{T}_r),
(a \le r \le b)$, where $\boldsymbol{\beta}_r$,  $\mathbf{T}_r$ are row vector and matrix, respectively, of
dimension $\nu$. We have $s_r(k)=\boldsymbol{\beta}_r\mathbf{T}_r^{k-1}\mathbf{T}_r^0$, where $\mathbf{T}_r^0
=(\mathbf{I}_{\nu}-\mathbf{T}_r)\mathbf{e}$. Using (\ref{sp23}), we obtain
\bea \mathbf{A}^*_r(x)
&=& \sum_{k=1}^{\infty} [\mathbf{D}(x)]^k \otimes s_r(k), \nn \\
&=& \sum_{k=1}^{\infty} [\mathbf{I}_m (\mathbf{D}(x))^{k-1} \mathbf{D}(x)]  \otimes (\boldsymbol{\beta}_r\mathbf{T}_r^{k-1}\mathbf{T}_r^0) \nn \\
&=& (\mathbf{I}_m \otimes \boldsymbol{\beta}_r) \Big(\sum_{k=1}^{\infty}   (\mathbf{D}(x))^{k-1} \otimes
\mathbf{T}_r^{k-1} \Big)
(\mathbf{D}(x) \otimes \mathbf{T}_r^0) \nn \\
&=& (\mathbf{I}_m \otimes \boldsymbol{\beta}_r) \Big(\sum_{k=1}^{\infty}   (\mathbf{D}(x) \otimes
\mathbf{T}_r)^{k-1}\Big)
(\mathbf{D}(x) \otimes \mathbf{T}_r^0). \nn \\
\mathbf{A}^*_r(x)&=& (\mathbf{I}_m \otimes \boldsymbol{\beta}_r) ( \mathbf{I}_{m \nu} - \mathbf{D}(x) \otimes
\mathbf{T}_r)^{-1} (\mathbf{D}(x) \otimes \mathbf{T}_r^0), \nn \eea
where $\otimes$ is used for the Kronecker product. \\
Since the inverse term is appearing in the expression of $\mathbf{A}^*_r(x)$, we can write $\mathbf{A}^*_r(x)$ as $\mathbf{A}^*_r(x)=\frac{\mathbf{X}_r(x)}{y_r(x)}$, where $y_r(x)$ in the determinant of the corresponding term. So we can conclude that each element of the matrix $\mathbf{A}^*_r(x)$ is a rational function with the denominator as $y_r(x)$.
\begin{remark}
(i) If we set  $\boldsymbol{\beta}_r=(1)$ and $\mathbf{T}_r=[1-\mu_r]$, we get $\mathbf{A}^*_r(x)$ for geometric service time distribution.\\
(ii) If we set  $\boldsymbol{\beta}_r=(1,0,0,\dots,0)$ and $\mathbf{T}_r=\begin{bmatrix}
    1-\mu_r            & \mu_r          &        &           &       \\
                     & 1-\mu_r        & \mu_r    &           &       \\
                     &              &  .     &   .       &       \\
                     &              &        &     .     &   .    \\
                     &              &        &           & 1-\mu_r
\end{bmatrix}$,
we get $\mathbf{A}^*_r(x)$ for negative binomial service time distribution.
\end{remark}
\subsubsection{Service-time follows arbitrary distribution}
Let service-time is arbitrarily distributed with maximum $K$ slots
so that \bea S^*_r (z)         &=& \sum\limits_{n=1}^{K} s_r(n)
z^n,~~~ \sum_{i=1}^{K} s_r(i)=1. \nn \eea This leads to \bea
\mathbf{A}^*_r(x) &=& \sum\limits_{n=1}^{K} s_r(n)
(\mathbf{D}(x))^n. \nn \eea
\begin{remark}
If $s_r(K)=1$ and $s_r(i)=0,$ (for all $ i < K$) then we obtain $\mathbf{A}^*_r(x)$ for deterministic service time distribution with parameter $K$ $ i.e.$ $\mathbf{A}^*_r(x)=(\mathbf{D}(x))^K$.
\end{remark}
\begin{remark}
From the above expressions of $\mathbf{A}^*_r(x)$, we can conclude that each element of $\mathbf{A}^*_r(x)$ can be written as $\mathbf{A}^*_r(x)=\frac{\mathbf{X}_r(x)}{y_r(x)}$ (for arbitrary service time distribution $y_r(x)=1$).
\end{remark}
%%%%%%%%%%%%%%%%%
\subsection{Computing process for evaluation of distributions at departure epochs}
First we present step-wise computing process for the evaluation of
unknown probability vector, then using it we extract the remaining
probability vectors.
\subsubsection{Determination of unknown probability vectors}
As each element of the matrix $\mathbf{A}^*_r(x)$ is a rational
function, so we assume that the $(i,j)$-th element of
$\mathbf{A}^*_r(x)$ is $\frac{X_{r;i,j}(x)}{y_r(x)}$. So we have
$(i,j)$-th element of $(x^b \mathbf{I}-\mathbf{A}^*_b(x))$ as \bea
[x^b \mathbf{I}-\mathbf{A}^*_b(x)]_{i,j} &=&
\frac{w_{i,j}(x)}{y_b(x)},~\hbox{where}~ w_{i,j}(x)~=~\left\{
                \begin{array}{ll}
                  x^by_b(x)-X_{b;i,j}(x),  &  i=j \\
                  -X_{b;i,j}(x),           &  i \ne j.
                \end{array}
              \right.    \lb{cdsp1}
\eea
Now (\ref{dsp6}) can be written in the following $m$ simultaneous equations in $m$ unknowns $\boldsymbol{\phi}_j^+(x),~~1\leq j\leq m$.
\bea
w_{1,1}(x)\phi_1^+(x)+w_{2,1}(x)\phi_2^+(x)+\ldots+w_{m,1}(x)\phi_m^+(x)&=&\Theta_1(x)\nn\\
w_{1,2}(x)\phi_1^+(x)+w_{2,2}(x)\phi_2^+(x)+\ldots+w_{m,2}(x)\phi_m^+(x)&=&\Theta_2(x)\nn\\
\vdots&& \vdots\nn\\
w_{1,m}(x)\phi_1^+(x)+w_{2,m}(x)\phi_2^+(x)+\ldots+w_{m,m}(x)\phi_m^+(x)&=&\Theta_m(x),\nn
\eea
where $\Theta_j(x),~1 \le j \le m$ is given as
\bea
\Theta_j(x)&=&\left[\prod_{i=a+1}^{b-1}y_{i}(x)\left\{ \sum_{l=1}^{m} \sum_{n=0}^{a-1}\sum_{k=1}^{m}\phi^+_{k}(n)  (\overline{D})^{(a-n)}_{k,l} X_{a;l,j}(x)x^b y_{b}(x)\right.\right.\nn\\
&&-\left.\left.\sum_{l=1}^{m}\sum_{n=0}^{a-1}\phi^+_{l}(n)x^n X_{b;l,j}(x)y_{a}(x) \right\} -\sum_{l=1}^{m}\sum_{n=a}^{b-1}\phi^+_{l}(n) \left\{ x^b y_{b}(x) X_{n;l,j}(x)\right.\right.\nn\\
&&\left.\left.-x^n y_{n}(x) X_{b;l,j}(x)\right\} \prod_{i=a,~ i\neq n}^{b-1}y_{i}(x)\right] \big/
\left[\prod_{i=a}^{b-1}y_{i}(x)\right]. \lb{cdsp2} \eea
%%%%%%%%%%%%%%%%%%%%%%%%%%%%%%%%
Now solving the above system of equations using Cramer's rule, we obtain $\phi_j^+(x),~1\leq j \leq m$, as
\bea
\phi_j^+(x)=\frac{\left|V_j(x)\right|}{\left|V(x)\right|},~~1\leq j \leq m, \lb{cdsp3}
\eea
where both $V_j(x)$ and $V(x)$ represent square matrix with $(k,l)$-th elements given by
\bea
[V_j(x)]_{k,l} = \left\{
                   \begin{array}{ll}
                     v_{l,k}(x), & l \neq j \\
                     \Theta_{k}(x), & l= j
                   \end{array}
                 \right.
~~~~ \hbox{and}~~~~ [V(x)]_{k,l}=v_{l,k}(z). \lb{cdsp4} \eea
The
$j$-th column of the square matrix $V_j(x)$ is replaced by
$[\Theta_1(x),\Theta_2(x),\ldots,\Theta_m(x)]^T$ and all other
elements are the same as those of $V(x)$.
\par
Let us assume that $|V(x)|$ is a polynomial in $x$ must possess a non-zero coefficient of power of $x$.
Finally, we have \bea \phi_j^+(x)=\frac{\Upsilon_j(x)}{\Upsilon(x)},~~1\leq j \leq m, \lb{cdsp5} \eea where
$\Upsilon_j(x)=|V_j(x)|$ and $\Upsilon(x)=|V(x)|$. To be more specific what we are having now is the pgf of
\emph{only} queue length distribution for each phase at departure epoch. Till now we have not discussed about
the determination of unknown probability vectors. To do this we consider (\ref{cdsp5}), and let us call
$\Upsilon(x) = 0$ as characteristic equation associated with the pgf of each phase. It can be easily shown
that $|x^b \mathbf{I}-\mathbf{A}^*_b(x)|\equiv \frac{\Upsilon(x)}{\{y_{b}(x)\}^m}=0$ has exactly $mb$ roots
inside and on the closed complex unit disk $|x|\leq 1$, see Gail et al. \cite{gail1995linear} [p. 5]. Let us
assume that these roots are distinct and denote them as $x_1,x_2,\ldots,x_{mb}$ with $x_{mb}=1$. However in
case of repeated roots the procedure has to be modified slightly which is standard in the literature of
queueing.
\par
Analyticity of $\phi_j^+(x)$ in $|x|\leq 1$ implies that the roots $x_1,x_2,\ldots,x_{mb-1}$ of $\Upsilon(x)=0$ (the denominator of (\ref{cdsp5})) must coincide with that of numerator. Thus by taking any one component of $\bPhi^+(x)$, say $\phi_j^+(x),~(1\leq j \leq m)$ we are led to $mb-1$ equations as
\bea
\Upsilon_j(x_i)=0,~~~1\leq i \leq mb-1. \lb{cdsp6}
\eea
The necessity of one more equation can be fulfilled by employing the normalizing condition $\bPhi^+(1)e=1$ which leads to
\bea
\sum_{j=1}^{m}\Upsilon_j^{'}(1)= \Upsilon^{'}(1). \lb{cdsp7}
\eea
Solving (\ref{cdsp6}) and (\ref{cdsp7}) together, we get the $mb$ unknowns
$\phi^+_j(n),~~(0\leq n\leq b-1,~1\leq j \leq m)$.
%%%%%%%%%%%%%%%%%
\subsection{Extraction of probability vectors from bivariate VGF}
In the previous section we have obtained unknown probability vectors $\{\bphi^+(n)\}_{n=0}^{b-1}$. So we have
bivariate VGF $\widetilde{\bpi}^+(x,\xi)$ in completely known form.  Now our aim is to extract the
probability vectors $\bpi^+(n,r),~n\ge 0,~a\le r \le b$, which can be done by inverting
$\widetilde{\bpi}^+(x,\xi)$, which is not easily tractable. To make it simpler we first collect the
coefficient of $\xi^j,~a\le j\le b$, from both the sides of (\ref{dsp9}) that are given by
\bea \mbox{coefficient of} ~ \xi^a:~~~~ \sum_{n=0}^{\infty}\bpi^+(n,a)x^n
&=& \sum_{n=0}^{a}\bphi^+(n)\overline{\mathbf{D}}^{a-n}\mathbf{A}^*_{a}(x). \lb{cdsp8}\\
\mbox{coefficient of} ~ \xi^j:~~~~\sum_{n=0}^{\infty}\bpi^+(n,j)x^n
&=& \bphi^+(j)\mathbf{A}^*_{j}(x),~~ a+1\le j \le b-1.
\lb{cdsp9}
\eea
\bea~~~~\mbox{coefficient of} ~ \xi^b:~~~~\nn\\
\sum_{n=0}^{\infty}\bpi^+(n,b)x^n &=& \left[\sum_{n=0}^{a-1}\bphi^+(n)\left\{\overline{\mathbf{D}}^{a-n}\mathbf{A}^*_{a}(x)-
x^n \mathbf{I}\right\}\right.\nn\\
&&\left.+\sum_{n=a}^{b-1}\bphi^+(n)\left\{\mathbf{A}^*_{n}(x)-x^n \mathbf{I} \right\}\right]
\mathbf{A}^*_{b}(x) \left[x^b \mathbf{I}- \mathbf{A}^*_{b}(x)\right]^{-1}. \lb{cdsp10} \eea
Now collecting the coefficient of $x^n$ from both the sides of (\ref{cdsp8}) and (\ref{cdsp9}) we get \bea
\bpi^+(n,a) &=& \sum_{n=0}^{a}\bphi^+(n) \overline{\mathbf{D}}^{a-n}\mathbf{A}_{a}(n),~~n\geq 0. \lb{cdsp11}\\
\bpi^+(n,j) &=& \bphi^+(j)\mathbf{A}_{j}(n),~~a+1\le j \le b-1,~~n\geq 0. \lb{cdsp12}
\eea
%
%(\textbf{NK:Hint}- $\mathbf{A}^*_{a}(x)=\sum_{n=0}^{\infty}\mathbf{A}_{a}(n)x^n$)\\ \\
%
Now only $\bpi^+(n,b)$ is left to be determined. That can be done by
inverting (\ref{cdsp10}), where each component of the vector is a
polynomial in $x$ for a specific service time distribution.
\par
Let us denote $\sum_{n=0}^{\infty}\bpi^+(n,b)x^n$ as
$\boldsymbol{\psi}^+(x)=\left[\psi_1^+(x),\psi_2^+(x),\ldots,\psi_m^+(x)\right]$ to make the analysis easier
for the rest portion of this section. In order to extract the probability vectors from
$\boldsymbol{\psi}^+(x)$ the same analysis as in the previous section for $\bPhi^+(x)$ has to be carried out.
In view of this, $\bPhi^+(x)$ and $\Theta_j(x)$ (used in earlier case in eqn. (\ref{cdsp2})) has to be
replaced by $\boldsymbol{\psi}^+(x)$ and $F_j(x)$, respectively, where $F_j(x)$, ($1\leq j \leq m$) is given
by
\bea F_j(x) &=& \left[\sum_{i=1}^{m}\left\{\sum_{l=1}^{m}\left(\sum_{n=0}^{a-1}\sum_{k=1}^{m}\phi^+_{k}(n)
(\overline{D})^{(a-n)}_{k,l}
\prod_{v=a+1}^{b-1} y_{v}(x) X_{a;l,i}(x) \right.\right.\right.\nn\\
&&~~~~~~~~~~~~~~\left.\left.\left.+\sum_{n=a}^{b-1}\phi^+_{l}(n) \prod_{v=a;v\ne n}^{b-1} y_{v}(x)
X_{n;l,i}(x) \right)\right.\right.\nn\\
&&\left.\left.-\left(\sum_{n=0}^{b-1}\phi^+_{i}(n)x^n \right)\prod_{v=a}^{b-1} y_{v}(x)\right\} X_{b;i,j}(x)
\right] \big/\left[\prod_{v=a}^{b-1} y_{v}(x)\right]. \lb{cdsp13} \eea
%%%%%%%%%%%%%%%%%%%%%%%%%%%%%%%%%%%%%%%%%%%%%%%%
Therefore the simplified form of $\psi^+_j(x)$ is given by
\bea
\psi_j^+(x) = \frac{\left|V_j(x)\right|}{\left|V(x)\right|},~~1\le j \le m, \lb{cdsp14}
\eea
where both $V_j(x)$ and $V(x)$ represent square matrix with $(k,l)$-th elements given by
\bea
[V_j(x)]_{k,l} = \left\{
                   \begin{array}{ll}
                     v_{l,k}(x), & l \neq j \\
                     F_{k}(x), & l= j
                   \end{array}
                 \right.
~~~~ \hbox{and}~~~~ [V(x)]_{k,l}=v_{l,k}(z). \lb{cdsp15} \eea
The
$j$-th column of the square matrix $V_j(x)$ is replaced by
$[F_1(x),F_2(x),\ldots,F_m(x)]^T$ and all other elements are the
same as those of $V(x)$.
\par
Let us assume that $|V(x)|$ is a polynomial in $x$ must possess a non-zero coefficient of power of $x$.
Finally, we have \bea \psi_j^+(x)=\frac{\Upsilon_j(x)}{\Upsilon(x)},~~1\leq j \leq m, \lb{cdsp16} \eea where
$\Upsilon_j(x)=|V_j(x)|$ and $\Upsilon(x)=|V(x)|$. Now as $\psi_j^+(x)$ is a rational function in completely
known polynomials, we can proceed to find its partial fraction. Let us assume that $\Upsilon_j(x)$ and
$\Upsilon(x)$ are the polynomials of degree $L_1$ and $M_1$, respectively.
\par
We already know that $\Upsilon(x)=0$ has $mb$ roots inside or on the unit circle. So there are total
$(M_1-mb)$ distinct roots of $\Upsilon(x)=0$ in $|x|>1$ (for repeated roots see Remark \ref{spRemark1}).  Let
us denote these roots by $\alpha_1,\alpha_2,\dots,\alpha_{M_1-mb}$. Now based on the value of $L_1$ and $M_1$
following two cases arise:
\begin{description}
  \item[Case-1:] $L_1 \ge M_1$ \\
  Applying the partial-fraction expansion, the rational function $\psi_j^+(x)~(1\le j \le m)$, can be uniquely written as
\bea \psi_j^+(x)=\sum_{i=0}^{L_1-M_1}\tau_{i,j}x^i +\sum_{k=1}^{M_1-mb}\frac{\gamma_{k,j}}{\alpha_{k}-x},
\lb{cdsp17} \eea for some constants $\tau_{i,j}$ and $\gamma_{k,j}$'s. The first sum is the result of the
division of the polynomial $\Upsilon_j(x)$ by $\Upsilon(x)$ and the constants $\tau_{i,j}$ are the
coefficients of the resulting quotient. Using the residue theorem, we have \bea
\gamma_{k,j}=-\frac{\Upsilon_j(\alpha_{k})}{\Upsilon^{\prime}(\alpha_{k})},\quad k=1,2,\ldots,M_1-mb. \nn
\eea Now, collecting the coefficient of $x^n$ from both the sides of (\ref{cdsp17}), we have \bea
\pi_j^+(n,b) &=& \tau_{n,j} + \sum\limits_{k=1}^{M_1-mb}\frac{\gamma_{k,j}}{\alpha^{n+1}_{k}},\quad n\geq 0.
\lb{cdsp18} \eea
\item[Case-2:] $L_1 < M_1$\\
Using partial-fraction technique on $\psi_j^+(x)$ we have
\bea
\psi_j^+(x)=\sum_{k=1}^{M_1-mb}\frac{\gamma_{k,j}}{\alpha_{k}-x}, \lb{cdsp19}
\eea
where
\bee
\gamma_{k,j}=-\frac{\Upsilon_j(\alpha_{k})}{\Upsilon^{\prime}(\alpha_{k})},\quad k=1,2,\ldots,M_1-mb. \nn
\eee
Now, collecting the coefficient of $x^n$ from both the sides of (\ref{cdsp19}), we obtain
\bea
\pi_j^+(n,b) &=& \sum\limits_{k=1}^{M_1-mb}\frac{\gamma_{k,j}}{\alpha^{n+1}_{k}},\quad n\geq0. \lb{cdsp20}
\eea
\end{description}
This completes the analysis of obtaining the departure epoch probability vectors presented in (\ref{cdsp11}), (\ref{cdsp11}), (\ref{cdsp18}), (\ref{cdsp20}).
\begin{remark} \lb{spRemark1}
In this paper we are assuming that all the roots of $\Upsilon(x)=0$ are distinct, for repeated roots slight
modification is needed for that one may refer to Pradhan and Gupta \cite{pradhan2017analysis}.
\end{remark}
%%%%%%%%%%%%%%%%%%%%%%%%%%%%%%%%%%%%%%%%%%%%%%%%%%%%%%%%%%%%%%%%%%%%%%%%%%%%%%%%%%%%%%%%%%%%%%%%
%\newpage
\section{Some special cases}\label{sec6}
In this section we discuss some special cases of the model studied in the previous sections.
%%%%%%%%%
\subsection{$D$-$MAP/G/1/\infty$ queue}\label{sec6.1}
We assume that the server provides individual service to the customers, according to the order of their
arrival, i.e., $a=1,~b=1$. As a result, the question of dependency of service rate on the batch size does not
arise, hence $G_n=G$. Therefore, our model reduces to the D-$MAP/G/1/\infty$ queue.
%%%%%%%%%%%%%%%%%%%%%%%%%%%%
From (\ref{dsp6}) we get the VGF of the queue content at departure epoch as
\bea \bPhi^+(x) (x \mathbf{I}-\mathbf{A}^*_1(x))  &=&  \bphi^+(0) \Big( (\overline{\mathbf{D}}) x
\mathbf{A}^*_1(x) - \mathbf{A}^*_1(x) \Big). \lb{dsp6special1} \eea
The probability vector of the queue content at arbitrary and pre-arrival epochs are given by \bea
\bp(0,0)  &=& \Omega^{-1}  \bphi^+(0)  (\mathbf{I}-\mathbf{C})^{-1}, \lb{asp1Special1} \\
\bpi(0,1) &=& \Big( \bp(0,0) \mathbf{D} + \Omega^{-1} (\bphi^+(1)-\bpi^+(0,1)) \Big)(\mathbf{I}-\mathbf{C})^{-1}, \lb{asp2Special1}  \\
\bpi(n,1) &=& \Big( \bpi(n-1,1) \mathbf{D} + \Omega^{-1} (\bphi^+(n+1)-\bpi^+(n,1)) \Big)(\mathbf{I}-\mathbf{C})^{-1},~~n \ge 1. \lb{asp5Special1}
\eea
\bea
\bp^-(0,0) &=&  \frac{\bp(0,0)\mathbf{D}}{\lambda^{*}},  \\
\bpi^-(n,1) &=&  \frac{\bpi(n,1)\mathbf{D}}{\lambda^{*}}, n \ge 0. \eea
Here $\bpi^+(n,1)=\bphi^+(n),~n \ge 0.$
%%%%%%%%%
One may note that the waiting-time analysis of D-$MAP/G/1$ queue can be obtained from those of Samanta
\cite{samanta2015waiting} by considering $D_n=0,~n \geq 2$.

%
%%%%%%%%%%%%%%%%%%%%%%
%\newpage
\subsection{$D$-$MAP/G^b/1/\infty$}\label{sec6.2}
We assume that the server provides service to the customers in batches of fixed size say $b$, i.e., $a=b$.
Moreover, the service rate does not depend on the service batch size, i.e., $G_n=G$. Therefore our model
reduces to D-$MAP/G^b/1/\infty$ queue.
%%%%%%%%%%%%
%%%%%%%%%%%%%
From (\ref{dsp6}) we get the VGF of the queue content at departure epoch as
\bea \bPhi^+(x) (x^b \mathbf{I}-\mathbf{A}^*(x)) &=& \sum_{n=a}^{b-1}  \bphi^+(n) \Big( x^b \mathbf{A}^*(x) -
x^{n} \mathbf{A}^*(x) \Big).  \lb{dsp6Special3} \eea
The probability vector of the queue content at arbitrary and pre-arrival epochs are given by \bea
\bp(n,0)  &=& \Omega^{-1} \sum_{j=0}^{n} \bphi^+(j) (\overline{\mathbf{D}})^{(n-j)} (\mathbf{I}-\mathbf{C})^{-1},~~~ 0 \le n \le b-1. \lb{asp1Special3} \\
\bpi(0,b) &=& \Big( \bp(b-1,0) \mathbf{D} + \Omega^{-1} (\bphi^+(b)-\bpi^+(0,b)) \Big)(\mathbf{I}-\mathbf{C})^{-1}, \lb{asp2Special3}  \\
\bpi(n,b) &=& \Big( \bpi(n-1,b) \mathbf{D} + \Omega^{-1} (\bphi^+(n+b)-\bpi^+(n,b)) \Big)(\mathbf{I}-\mathbf{C})^{-1},~~n \ge 1. \lb{asp5Special3}
\eea
\bea
\bp^-(n,0) &=&  \frac{\bp(n,0)\mathbf{D}}{\lambda^{*}},~~~~ 0 \le n \le b-1,  \\
\bpi^-(n,b) &=&  \frac{\bpi(n,b)\mathbf{D}}{\lambda^{*}},~~~~ n \ge 0.
\eea
\begin{remark}
In the case of D-$MAP/G/1/\infty$ and D-$MAP/G^b/1/\infty$, only the VGF of the queue length can be obtained.
Since in both the cases server serves only a fixed number of customers.
\end{remark}
%\newpage
\subsection{$D$-$MAP/G^{(a,b)}/1/\infty$}\label{sec6.3}
Although, the finite-buffer D-$MAP/G^{(a,b)}/1/N$ queue has been studied by Chaudhry and Gupta
\cite{chaudhry2003analysis}, the corresponding infinite-buffer D-$MAP/G^{(a,b)}/1/\infty$ queue has not been
considered so far in the literature. The results of this model can be obtained by dropping the batch-size
dependency service in our model, i.e., we assume $G_n=G$.
%%%%%%%%%%%%%
%%%%%%%%%%%%%%%%%%%
From (\ref{dsp6}) we get the VGF of the queue content at departure epoch as
\bea
\bPhi^+(x) (x^b \mathbf{I}-\mathbf{A}^*(x)) &=& \sum_{n=0}^{a-1} \bphi^+(n) \Big( (\overline{\mathbf{D}})^{(a-n)} x^b
\mathbf{A}^*(x) - x^{n} \mathbf{A}^*(x) \Big) \nn \\
&+&  \sum_{n=a}^{b-1}  \bphi^+(n) \Big( x^b \mathbf{A}^*(x) - x^{n} \mathbf{A}^*(x) \Big).  \lb{dsp6Special2}
\eea Joint VGF of the queue and server content distribution at departure epoch is given by \bea
\widetilde{\bpi}^+(x,\xi)
&=& \Bigg( \sum_{n=0}^{a-1} \bphi^+(n) \Big( (\xi^b-\xi^a) (\overline{\mathbf{D}})^{(a-n)} \mathbf{A}^*(x) \mathbf{A}^*(x) + (\overline{\mathbf{D}})^{(a-n)} \xi^a x^b  \mathbf{A}^*(x) - \xi^b x^n  \mathbf{A}^*(x) \Big) \nn \\
&+&  \sum_{n=a}^{b-1}  \bphi^+(n) \Big( (\xi^b-\xi^n) \mathbf{A}^*(x)\mathbf{A}^*(x) + \xi^n x^b \mathbf{A}^*(x) - \xi^b x^{n} \mathbf{A}^*(x) \Big)\Bigg)(x^b \mathbf{I}-\mathbf{A}^*(x))^{-1}. \nn \\
 \lb{dsp9Special2}
\eea Joint queue and server content distribution at arbitrary and pre-arrival epochs is same as given in
(\ref{asp1})-(\ref{asp5}) and (\ref{asp12})-(\ref{asp13}), respectively.
%%%%%%%%%%%%%%%%%%%%%%%%%%%%%%%%%%%%%%%%%%%%%%%%%%%%%%%%%%%%%%%%%%%%%%%%%%%%%%%%%%%%%%%%%%%%%%%%
\section{Performance measures}\label{sec7}
Having found the probability vectors $\textbf{p}(n,0)$, ($0\leq n
\leq a-1$), $\bpi(n,r)$, ($a\leq r \leq b$, $n\geq 0$), the other
significant distributions of interest can be easily obtained and are
given below.
\begin{itemize}
 \item Distribution of the number of customers in the system at an arbitrary epoch
 (including number of customers with the server) is given by\\
$ p_n^{system} = \left\{\begin{array}{r@{\mskip\thickmuskip}l}
& \textbf{p}(n,0)\bec\hspace{3.8cm} 0\leq n \leq a-1,\\
& \sum_{r=a}^{min(b,n)}\bpi(n-r,r)\bec\hspace{1.5cm} a\leq n \leq b,\vspace{0.3cm}\\
&\sum_{r=a}^{b}\bpi(n-r,r)\bec\hspace{2.2cm} n\geq b+1.
\end{array}\right.$\\

\item Distribution of the number of customers in the queue at arbitrary epoch is given by\\
$ p_n^{queue} =\left\{\begin{array}{r@{\mskip\thickmuskip}l}
& \textbf{p}(n,0)\bec+\sum_{r=a}^{b}\bpi(n,r)\bec\hspace{1.4cm} 0\leq n \leq a-1,\\
&\sum_{r=a}^{b}\bpi(n,r)\bec\hspace{3.2cm} n\geq a.
\end{array}\right.
$

\item Distribution of the number of customers in service given that server is busy
\bea p_r^{server} = c\sum_{n=0}^{\infty}\bpi(n,r)\bec,~~  a\leq r
\leq b\eea where
$c^{-1}=\left[1-\sum_{n=0}^{a-1}\textbf{p}(n,0)\bec\right]=P_{busy}$
\end{itemize}
It is very much essential to study the performance measures of the
queueing system as they play a notable role in designing and
improving the efficiency of the system. Some performance measures
are listed below:
\begin{itemize}
\item average number of customers waiting in the queue ($L_q)=\sum_{n=0}^{\infty}n p_n^{queue}$,
\item average number of customers in the system ($L)=\sum_{n=0}^{\infty}n p_n^{system}$,
\item average number of customers with the server ($L_s)=\sum_{r=a}^{b}r p_r^{server}$,
\item average waiting time of a customer in the queue $(W_q)=\frac{L_q}{\la^*}$, as well as in the system $(W)=\frac{L}{\la^*}$.
\item the probability that the server is idle $(P_{idle})=\sum_{n=0}^{a-1}\textbf{p}(n,0)\bec$.
\end{itemize}
%%%%%%%%%%%%%%%%%%%%
%%%%%%%%%%%%%%%%%%%%%%%%%%%%%%%%%%%%%%%%%%%%%%%%%%%%%%%%%%%%%%%%%%%%%%%%%%%%%%%%%%%%%%%%%%%%%%%%
\section{Numerical examples}\label{sec8}
In this section, we illustrate the methodology and the results derived in previous sections through some
numerical examples which have been done using Maple 15 on PC having configuration Intel (R) Core (TM) i5-3470
CPU Processor @ 3.20 GHz with 4.00 GB of RAM. Though several results have been generated, a few of them are
presented here which may be useful to researchers and practitioners. Numerical results for two different
service-time distributions viz. discrete phase-type and negative binomial are given in the following
examples. All the results are presented in 6 decimal for sake of brevity.
\begin{example}
The discrete phase($DPH$)-type service time distribution.
\end{example}
\nii
In this example the D-$MAP$ is represented by the matrices \\
$\textbf{C}=\bordermatrix{ &    &   &   \cr
                        & 0.30   &   0.10  & 0.15 \cr
                          & 0.35   & 0.05  & 0.20 \cr
                          & 0.15   & 0.10  & 0.15 \cr}$
                          and
                          $\textbf{D}=\bordermatrix{ &    &  &   \cr
                          & 0.10   & 0.25 & 0.10  \cr
                          & 0.20   & 0.15 & 0.05 \cr
                          & 0.45   & 0.05 & 0.10 \cr}$,\\ \\
%%%%
that gives  $\la^*=0.474456$, $\overline{\bpi}=[0.489130,  0.260869, 0.2500000]$.
The $DPH$-type distribution has the representation ($\boldsymbol{\beta},~\textbf{T}$), where
$\boldsymbol{\beta}$ is a row vector of order $\nu$ and $\textbf{T}$ is a square matrix of order $\nu$. The
parameters chosen are $a=6$, $b=10$, $m=3$, and the batch-size dependent service time distribution for
$DPH_n$($\boldsymbol{\beta}_n$,$\textbf{T}_n$), $6\leq n \leq 10$ is given in the following table.\\~ \\
% The PH-distribution for different batch-size is given in the following table\\~\\
                          $\begin{tabular}{|c|c|c|c|}\hline
                             batch size ($n$)& $\boldsymbol{\beta}_n$ & $\mathbf{T}_n$ & $S_n$ \\\hline
                             6& $\left(
                                  \begin{array}{ccc}
                                    0.3 & 0.4 & 0.3 \\
                                  \end{array}
                                \right)$
                              & $\left(
                                  \begin{array}{ccc}
                                    0.7 & 0.2 & 0.1 \\
                                    0.2 & 0.6 & 0.1 \\
                                    0.1 & 0.0 & 0.8 \\
                                  \end{array}
                                \right)$
                              & 15.60 \\\hline
                             7& $\left(
                                  \begin{array}{ccc}
                                    0.5 & 0.3 & 0.2 \\
                                  \end{array}
                                \right)$ & $\left(
                                  \begin{array}{ccc}
                                    0.7 & 0.1 & 0.1 \\
                                    0.1 & 0.7 & 0.1 \\
                                    0.1 & 0.2 & 0.6 \\
                                  \end{array}
                                \right)$
                                & 10.00\\\hline
                             8& $\left(
                                  \begin{array}{ccc}
                                    0.4 & 0.2 & 0.4 \\
                                  \end{array}
                                \right)$ & $\left(
                                  \begin{array}{ccc}
                                    0.8 & 0.0 & 0.1 \\
                                    0.1 & 0.6 & 0.1 \\
                                    0.0 & 0.1 & 0.8 \\
                                  \end{array}
                                \right)$
                                & 8.461538\\\hline
                             9& $\left(
                                  \begin{array}{cccc}
                                    0.25 & 0.25 & 0.25&0.25 \\
                                  \end{array}
                                \right)$ & $\left(
                                  \begin{array}{cccc}
                                    0.4 & 0.0 & 0.0 & 0.5\\
                                    0.0 & 0.7 & 0.2 &0.0\\
                                    0.0 & 0.0 & 0.5 &0.3\\
                                    0.2 & 0.1 & 0.0 &0.6\\
                                  \end{array}
                                \right)$
                                & 8.678161\\\hline
                             10& $\left(
                                  \begin{array}{cccc}
                                    0.3 & 0.1 & 0.2&0.4 \\
                                  \end{array}
                                \right)$ & $\left(
                                  \begin{array}{cccc}
                                    0.6 & 0.0 & 0.0&0.3 \\
                                    0.0 & 0.6 & 0.1 &0.0\\
                                    0.0 & 0.0 & 0.5 &0.2\\
                                    0.1 & 0.1 & 0.0 &0.7\\
                                  \end{array}
                                \right)$
                                &6.604651\\\hline
                          \end{tabular}$\\
                          ~\\
So $\rho=0.313361$. The joint queue and server content distribution for D-$MAP/PH_n^{(6,10)}/1$ queue, at
different epochs has been displayed in Tables \ref{mapbsd1} and \ref{mapbsd2}.
%%%%%%%%%%%%%%%%%%%%%%%%%%%%%%%%%%%%%%%%%%%%%%%%%%%%%%%%%%%
\begin{example}
Negative binomial (NB) service time distribution
\end{example}
\nii The matrices correspond to the D-$MAP$ are given
by\\
$\textbf{C}=\bordermatrix{ &    & &    \cr
                          &0.4   &   0.1 &0.05  \cr
                          & 0.25  & 0.05 &0.30  \cr
                          &0.10 & 0.15 & 0.15\cr}$
 and $\textbf{D}=\bordermatrix{ &    &  &   \cr
                          & 0.15   & 0.20  &0.10 \cr
                          & 0.15   & 0.20  &0.05 \cr
                          &0.05 & 0.45 & 0.10\cr}$. \\ \\
That gives $\la^*=0.469067$, $\overline{\bpi}=[0.398305,0.355932, 0.245762]$. The input parameters chosen are
$a=4$, $b=7$, $m=3$, and mean service times of NB distribution for batch size dependent service
time distributions are %displayed in the following table.\\
taken as
%\begin{center}
\begin{tabular}
{|c|c|}\hline
                batch size ($r$)& $S_r$  \\\hline
                 4&  3  \\
                 5&  4 \\
                 6&  5.666667  \\
                 7&   9 \\\hline
\end{tabular}.
So $\rho=0.603087$. The joint queue and server content distribution for D-$MAP/NB_n^{(4,7)}/1$ queue, at different
epochs has been displayed in Tables \ref{mapbsd3} - \ref{mapbsd4}.\\
\begin{sidewaystable}
$\vspace{0.1cm}$
\caption{Joint distribution of queue and server content and phase of the arrival process at \\
departure epoch for D-MAP$/G^{(6,10)}_n/1$ queue with $G\sim DPH$} \lb{mapbsd1}
\begin{tiny}
\begin{tabular}{|c|ccc|ccc|ccc|ccc|ccc|c|}\hline
&\multicolumn{3}{c|}{$r=6$}&\multicolumn{3}{c|}{$r=7$}&\multicolumn{3}{c|}{$r=8$}&\multicolumn{3}{c|}{$r=9$}
&\multicolumn{3}{c|}{$r=10$}&\\\hline
$n$ & $\pi^+_1(n,6)$ &$\pi^+_2(n,6)$& $\pi^+_3(n,6)$ & $\pi^+_1(n,7)$ & $\pi^+_2(n,7)$ & $\pi^+_3(n,7)$ &$\pi^+_1(n,8)$ & $\pi^+_2(n,8)$&$\pi^+_3(n,8)$  & $\pi^+_1(n,9)$ & $\pi^+_2(n,9)$ &$\pi^+_3(n,9)$& $\pi^+_1(n,10)$ & $\pi^+_2(n,10)$ &$\pi^+_3(n,10)$&$\boldsymbol{\psi}_n^+\be$\\\hline
0&0.020431&0.006413&0.012075&0.002774&0.000914&0.001657&0.002860&0.000941&0.001707&0.002569&0.000844&0.001533&0.002786&0.000913&0.001662&0.060079\\
1&0.032679&0.018175&0.016415&0.004589&0.002507&0.002295&0.004607&0.002545&0.002293&0.004064&0.002263&0.002016&0.006699&0.003206&0.003555&0.107908\\
2&0.027662&0.015246&0.013953&0.003691&0.002066&0.001852&0.003541&0.002000&0.001771&0.003045&0.001726&0.001520&0.008872&0.004530&0.004608&0.096083\\
3&0.024095&0.013240&0.012166&0.002991&0.001672&0.001499&0.002766&0.001558&0.001382&0.002359&0.001329&0.001178&0.009975&0.005227&0.005131&0.086568\\
4&0.021124&0.011602&0.010668&0.002422&0.001354&0.001214&0.002164&0.001219&0.001081&0.001847&0.001040&0.000923&0.010354&0.005503&0.005296&0.077811\\
5&0.018528&0.010177&0.009356&0.001961&0.001096&0.000983&0.001693&0.000954&0.000846&0.001454&0.000818&0.000727&0.010246&0.005494&0.005223&0.069556\\
15&0.004859&0.002671&0.002453&0.000237&0.000132&0.000119&0.000138&0.000078&0.000069&0.000135&0.000076&0.000067&0.003898&0.002138&0.001969&0.019039\\
30&0.000645&0.000354&0.000325&0.000010&0.000005&0.000005&0.000003&0.000001&0.000001&0.000003&0.000002&0.000001&0.000532&0.000292&0.000268&0.002447\\
50&0.000038&0.000024&0.000022&0.000000&0.000000&0.000000&0.000000&0.000000&0.000000&0.000000&0.000000&0.000000&0.000035&0.000019&0.000018&0.000156\\
70&0.000002&0.000001&0.000001&0.000000&0.000000&0.000000&0.000000&0.000000&0.000000&0.000000&0.000000&0.000000&0.000002&0.000001&0.000001&0.000008\\
$\geq 80$&0.000000&0.000000&0.000000&0.000000&0.000000&0.000000&0.000000&0.000000&0.000000&0.000000&0.000000&0.000000&0.000000&0.000000&0.000000&0.000000\\\hline
Total&0.273939&0.145986&0.139975&0.026778&0.014280&0.013686&0.023617&0.012594&0.012070&0.020771&0.011077&0.010616&0.144106&0.076847&0.073651&1.000000\\\hline
\end{tabular}
\end{tiny}
$\vspace{0.2cm}$
\caption{Joint distribution of queue and server content and phase of the arrival process at \\
arbitrary epoch for D-MAP$/G^{(5,9)}_n/1$ queue with $G\sim DPH$}\lb{mapbsd2}
\begin{tiny}
\begin{tabular}{|c|ccc|ccc|ccc|ccc|c|}\hline
&\multicolumn{3}{c|}{$r=0$}&\multicolumn{3}{c|}{$r=6$}&\multicolumn{3}{c|}{$r=7$}&\multicolumn{3}{c|}{$r=8$}\\\hline
$n$ & $p_1(n,0)$ & $p_2(n,0)$ &$p_3(n,0)$&$\pi_1(n,6)$ & $\pi_2(n,6)$ &$\pi_3(n,6)$ & $\pi_1(n,7)$ & $\pi_2(n,7)$ &$\pi_3(n,7)$ & $\pi_1(n,8)$ & $\pi_2(n,8)$ &$\pi_3(n,8)$ \\\hline
0&0.004053&0.001353&0.002428&0.034176&0.018295&0.017416&0.003284&0.001453&0.001805&0.002843&0.001264&0.001560\\
1&0.011169&0.005138&0.006011&0.029689&0.016284&0.014988&0.002627&0.001435&0.001314&0.002187&0.001204&0.001090\\
2&0.017446&0.008584&0.009196&0.025955&0.014263&0.013106&0.002113&0.001183&0.001061&0.001696&0.000957&0.000848\\
3&0.023115&0.011676&0.012064&0.022718&0.012484&0.011470&0.001713&0.000957&0.000858&0.001324&0.000746&0.000661\\
4&0.028207&0.014459&0.014641&0.019880&0.010925&0.010037&0.001386&0.000775&0.000695&0.001033&0.000582&0.000516\\
5&0.032759&0.016950&0.016943&0.017390&0.009558&0.008779&0.001123&0.000628&0.000562&0.000805&0.000454&0.000402\\
15&&&&0.004533&0.002492&0.002288&0.000136&0.000076&0.000068&0.000064&0.000036&0.000032\\
30&&&&0.000602&0.000331&0.000303&0.000005&0.000003&0.000003&0.000001&0.000000&0.000000\\
50&&&&0.000040&0.000022&0.000020&0.000000&0.000000&0.000000&0.000000&0.000000&0.000000\\
70&&&&0.000002&0.000001&0.000001&0.000000&0.000000&0.000000&0.000000&0.000000&0.000000\\
$\geq 80$&&&&0.000000&0.000000&0.000000&0.000000&0.000000&0.000000&0.000000&0.000000&0.000000\\\hline
Total&0.489130&0.260869&0.250000&0.270667&0.148246&0.136812&0.017029&0.009107&0.008694&0.012707&0.006800&0.006486\\\hline
\end{tabular}
\end{tiny}
$\vspace{0.2cm}$
\begin{tiny}
\begin{tabular}{|c|ccc|ccc|c|}\hline
&\multicolumn{3}{c|}{$r=9$}&\multicolumn{3}{c|}{$r=10$}&\\\hline
$n$ & $\pi_1(n,9)$ & $\pi_2(n,9)$ &$\pi_3(n,9)$& $\pi_1(n,10)$ & $\pi_2(n,10)$& $\pi_3(n,10)$&\\\hline
0&0.002493&0.001110&0.001368&0.002119&0.000951&0.001159&0.099130\\
1&0.001914&0.001054&0.000954&0.003374&0.001680&0.001767&0.103879\\
2&0.001491&0.000840&0.000746&0.004055&0.002102&0.002094&0.107736\\
3&0.001175&0.000661&0.000587&0.004353&0.002300&0.002231&0.111093\\
4&0.000926&0.000521&0.000463&0.004391&0.002346&0.002241&0.114024\\
5&0.000730&0.000411&0.000365&0.004262&0.002294&0.002169&0.116584\\
15&0.000068&0.000038&0.000034&0.001537&0.000843&0.000776&0.013021\\
30&0.000001&0.000001&0.000000&0.000208&0.000114&0.000105&0.001677\\
50&0.000000&0.000000&0.000000&0.000014&0.000007&0.000007&0.000110\\
70&0.000000&0.000000&0.000000&0.000000&0.000000&0.000000&0.000004\\
$\geq 80$&0.000000&0.000000&0.000000&0.000000&0.000000&0.000000&0.000000\\\hline
Total&0.011462&0.006133&0.005850&0.060510&0.032418&0.030869&1.000000\\\hline
\multicolumn{8}{|c|}{$L$=11.399998,~~ $L_q$=6.164917,~~ $L_{s}$=6.854090}\\
\multicolumn{8}{|c|}{}\\
\multicolumn{8}{|c|}{$P_{idle}$=0.236203,~~ $W$=24.027487,~~ $W_q$=12.993639}\\\hline
\end{tabular}
\end{tiny}
\end{sidewaystable}
%%%%%%%%%%%%%%%%%%%%%%%%%%%%%%%%%%%%%%%%%%%%%%%%%%%%%%%%%%%%%%%%%
\begin{sidewaystable}
$\vspace{0.1cm}$
\caption{Joint distribution of queue and server content and phase of the arrival process at \\departure epoch for D-MAP$/G^{(4,7)}_n/1$ queue, with $G\sim $NB}  \lb{mapbsd3}
\begin{tiny}
\begin{tabular}{|c|ccc|ccc|ccc|ccc|c|}\hline
&\multicolumn{3}{c|}{$r=4$}&\multicolumn{3}{c|}{$r=5$}&\multicolumn{3}{c|}{$r=6$}&\multicolumn{3}{c|}{$r=7$}&\\\hline
$n$ & $\pi^+_1(n,4)$ & $\pi^+_2(n,4)$ & $\pi^+_3(n,4)$  & $\pi^+_1(n,5)$ & $\pi^+_2(n,5)$  & $\pi^+_3(n,5)$  & $\pi^+_1(n,6)$ & $\pi^+_2(n,6)$ &$\pi^+_3(n,6)$
& $\pi^+_1(n,7)$ & $\pi^+_2(n,7)$ & $\pi^+_3(n,7)$ & $\boldsymbol{\psi}_n^+\be$ \\\hline
0&0.121956&0.042343&0.076275&0.001589&0.000563&0.000865&0.000425&0.000151&0.000224&0.000094&0.000033&0.000047&0.244565\\
1&0.134098&0.144599&0.087626&0.002001&0.001966&0.001316&0.000626&0.000557&0.000402&0.000204&0.000146&0.000122&0.373663\\
2&0.073266&0.086478&0.047233&0.001404&0.001522&0.000904&0.000546&0.000541&0.000347&0.000268&0.000224&0.000165&0.212898\\
3&0.032838&0.040127&0.021078&0.000813&0.000914&0.000521&0.000398&0.000411&0.000253&0.000285&0.000254&0.000177&0.098069\\
4&0.013417&0.016701&0.008592&0.000430&0.000493&0.000275&0.000266&0.000281&0.000169&0.000271&0.000250&0.000169&0.041314\\
5&0.005194&0.006540&0.003321&0.000215&0.000251&0.000138&0.000169&0.000181&0.000108&0.000241&0.000227&0.000150&0.016735\\
10&0.000031&0.000040&0.000019&0.000004&0.000005&0.000003&0.000012&0.000013&0.000007&0.000087&0.000085&0.000054&0.000360\\
15&0.000000&0.000000&0.000000&0.000000&0.000000&0.000000&0.000000&0.000000&0.000000&0.000024&0.000024&0.000015&0.000063\\
20&0.000000&0.000000&0.000000&0.000000&0.000000&0.000000&0.000000&0.000000&0.000000&0.000006&0.000006&0.000004&0.000016\\
$\geq30$&0.000000&0.000000&0.000000&0.000000&0.000000&0.000000&0.000000&0.000000&0.000000&0.000000&0.000000&0.000000&0.000000\\\hline
Total&0.383811&0.340657&0.246069&0.006651&0.005942&0.004147&0.002685&0.002400&0.001667&0.002374&0.002122&0.001468&1.000000\\\hline
\end{tabular}

$\vspace{0.1cm}$
\caption{Joint distribution of queue and server content and phase of the arrival process at \\arbitrary epoch for D-MAP$/G^{(4,7)}_n/1$ queue, with $G\sim $NB}\lb{mapbsd4}

\begin{tabular}{|c|ccc|ccc|ccc|ccc|ccc|c|}\hline
&\multicolumn{3}{c|}{$r=0$}&\multicolumn{3}{c|}{$r=4$}&\multicolumn{3}{c|}{$r=5$}&\multicolumn{3}{c|}{$r=6$}&\multicolumn{3}{c|}{$r=7$}&\\\hline
$n$ & $p_1(n,0)$ & $p_2(n,0)$&$p_3(n,0)$ &$\pi_1(n,4)$ & $\pi_2(n,4)$ & $\pi_3(n,4)$ & $\pi_1(n,5)$ & $\pi_2(n,5)$  & $\pi_3(n,5)$ & $\pi_1(n,6)$ & $\pi_2(n,6)$ &$\pi_3(n,6)$ &$\pi_1(n,7)$&$\pi_2(n,7)$& $\pi_3(n,7)$ & $p_n^{queue}$ \\\hline
0&0.031317&0.011129&0.016323&0.065014&0.073832&0.042988&0.001416&0.001094&0.000858&0.000626&0.000459&0.000373&0.000282&0.000197&0.000165&0.246073\\
1&0.064769&0.047618&0.039072&0.031897&0.038565&0.020591&0.000751&0.000853&0.000497&0.000400&0.000421&0.000262&0.000339&0.000295&0.000210&0.246540\\
2&0.083587&0.069883&0.051231&0.013668&0.016862&0.008760&0.000412&0.000473&0.000265&0.000270&0.000286&0.000172&0.000330&0.000305&0.000206&0.246710\\
3&0.092196&0.080329&0.056750&0.005436&0.006804&0.003478&0.000210&0.000244&0.000134&0.000171&0.000184&0.000109&0.000293&0.000278&0.000184&0.246800\\
4&&&&0.002068&0.002614&0.001322&0.000103&0.000120&0.000066&0.000105&0.000113&0.000067&0.000248&0.000239&0.000156&0.007221\\
5&&&&0.000763&0.000971&0.000487&0.000049&0.000057&0.000031&0.000063&0.000068&0.000040&0.000204&0.000198&0.000128&0.003059\\
10&&&&0.000004&0.000005&0.000002&0.000000&0.000000&0.000000&0.000003&0.000004&0.000002&0.000062&0.000061&0.000039&0.000182\\
15&&&&0.000000&0.000000&0.000000&0.000000&0.000000&0.000000&0.000000&0.000000&0.000000&0.000016&0.000016&0.000010&0.000042\\
20&&&&0.000000&0.000000&0.000000&0.000000&0.000000&0.000000&0.000000&0.000000&0.000000&0.000004&0.000004&0.000002&0.000010\\
$\geq30$&&&&0.000000&0.000000&0.000000&0.000000&0.000000&0.000000&0.000000&0.000000&0.000000&0.000000&0.000000&0.000000&0.000000\\\hline
Total&0.271871&0.208960&0.163376&0.119276&0.140197&0.077901&0.002985&0.002893&0.001881&0.001724&0.001630&0.001080&0.002447&0.002250&0.001522&1.000000\\\hline
\multicolumn{17}{|c|}{$L$=3.012144,~~ $L_q$=1.553687,~~ $L_{s}$=4.099194}\\
\multicolumn{17}{|c|}{}\\
\multicolumn{17}{|c|}{$P_{idle}$=0.644208,~~ $W$=6.421554,~~ $W_q$=3.312288}\\\hline

\end{tabular}
\end{tiny}
\end{sidewaystable}

%%%%%%%%%%%%%%%%%%%%%%%%%%%%%%%%%%%%%%%%%%%%%%%%%%%%%%%%%%%%%%%%%%%%%%%%%%%%%%%%%%%%%%%%%%%%%%%%

\clearpage

\vfill
%\begin{center}\dots D-MAP Queueing Model Ends  here \dots \end{center}
\newpage

%%%%%%%%%%%%%%%%%%%%%%%%%%%%%%%%%%%%%%%%%%%%%%%%%%%%%%%%%%%%%%%%%%%%%%%%%%%%%%%%%%%%%%%%%%%%%%%%%%%%%%%%%%%%%%%%%
%\section{Performance Measures} \label{sectionPerMeas}
%%%%%%%%%%%%%%%%%%%%%%%%%%%%%%%%%%%%%%%%%%%%%%%%%%%%%%%%%%%%%%%%%%%%%%%%%%%%%%%%%%%%%%%%%%%%%%%%%
\section{Conclusion} \label{sec9}
In this paper we have addressed a much complicated yet significant, infinite-buffer discrete-time batch
service queue with the assumption of correlated arrival process, i.e., discrete-time Markovian arrival
process, with general batch size dependent service time distribution. We have used the supplementary variable
technique for the mathematical modeling and the pgf approach to obtain the probability vector generating
function of the joint distribution of the queue and server content at the departure epoch. The required
distribution is then extracted from the completely known generating function using the roots method, and its
relation has been established with the distribution at various epochs such as arbitrary, pre-arrival and
outside observer's epoch. We have discussed some significant characteristics along with some special cases of
the model. The computing process is explained thoroughly and some numerical results are also presented.

%%%%%%%%%%%%%%%%%%%%%%%%%%%%%%%%%%%%%%%%%%%%%%%%%%%%%%%%%%%%%%%%%%%%%%%%%%
%%\section{References}
%%%BibTeX users please use one of
%\bibliographystyle{plain}  % (uses file "plain.bst")
%%\bibliographystyle{elsarticle-num}
%% \bibliographystyle{spbasic}      % basic style, author-year citations
%\bibliography{PaperDMAP}
%%************************************************************************************

%*************************-----figures---------******************
%\begin{figure}[htbp]
%  \begin{minipage}[b]{0.5\linewidth}
%    \centering
%    \includegraphics[width=\linewidth]{figure01.eps}
%    \caption{The caption for figure 1}
%    \label{fig:chapter001_dist_001}
%  \end{minipage}
%  \hspace{0.5cm}
%  \begin{minipage}[b]{0.5\linewidth}
%    \centering
%    \includegraphics[width=\linewidth]{figure02.eps}
%    \caption{The caption for figure 2.}
%    \label{fig:chapter001_reward_001}
%  \end{minipage}
%\end{figure}

\end{document}